\newtheorem{lemma}{Lemma}[section]
\newtheorem{thm}[lemma]{Theorem}
\newtheorem{rem}[lemma]{Remark}
\newtheorem{prop}[lemma]{Proposition}
\newcommand\matZ{{\mathbb{Z}}}
\newcommand\matC{{\mathbb{C}}}
\newcommand\Sigmatil{{\widetilde\Sigma}}
\newcommand\gtil{{\widetilde{g}}}
\renewcommand{\hbar}{{\overline{h}}}
\newfont{\Got}{eufm10 scaled 1200}
\newcommand{\permu}{{\hbox{\Got S}}}
\newcommand{\compo}{\,{\scriptstyle\circ}\,}
\newcommand{\mycap} [1] {\caption{\footnotesize{#1}}}
\newcommand{\primo}{\textrm{I}}
\newcommand{\secon}{\textrm{I\!I}}
\newcommand{\terzo}{\textrm{I\!I\!I}}
\newcommand{\quart}{\textrm{I\!V}}
\newcommand{\quint}{\textrm{V}}
\newcommand{\sesto}{\textrm{V\!I}}
\newcommand{\setti}{\textrm{V\!I\!I}}
\newcommand{\sistema}[1]{\left\{\begin{array}{l} #1 \end{array}\right.}
\begin{document}

\title{Explicit computation of some\\ families of Hurwitz numbers}

\author{Carlo~\textsc{Petronio}\thanks{Partially supported by INdAM through GNSAGA, by
MIUR through PRIN ``Real and Complex Manifolds: Geometry, Topology and Harmonic Analysis''
and by UniPI through PRA ``Geometria e Topologia delle variet\`a''}}

\maketitle

\begin{abstract}\noindent
We compute the number of (weak) equivalence classes of branched covers from a surface
of genus $g$ to the sphere, with
$3$ branching points, degree $2k$, and local degrees over the branching points
of the form $(2,\ldots,2)$, $(2h+1,1,2,\ldots,2)$, $\pi=\left(d_i\right)_{i=1}^\ell$,
for several values of $g$ and $h$. We obtain explicit formulae
of arithmetic nature in terms of the local degrees $d_i$.
Our proofs employ a combinatorial method based on Grothendieck's \emph{dessins d'enfant}.

\smallskip

\noindent MSC (2010): 57M12.
\end{abstract}

\noindent
In this introduction we describe the enumeration problem faced in the present
paper and the situations in which we solve it.

\paragraph{Surface branched covers} A surface branched cover is a
map $$f:\Sigmatil\to\Sigma$$
where $\Sigmatil$ and $\Sigma$ are closed and connected surfaces and $f$ is locally modeled
on maps of the form
$$(\matC,0)\ni z\mapsto z^m\in(\matC,0).$$
If $m>1$ the point
$0$ in the target $\matC$ is called a \emph{branching point},
and $m$ is called the local degree at the point $0$ in the source $\matC$.
There are finitely many branching points, removing which, together
with their pre-images, one gets a genuine cover of some degree $d$.
If there are $n$ branching points, the local degrees at the points
in the pre-image of the $j$-th one form a partition $\pi_j$ of $d$ of some
length $\ell_j$, and the following Riemann-Hurwitz relation holds:
$$\chi\left(\Sigmatil\right)-(\ell_1+\ldots+\ell_n)=d\left(\chi\left(\Sigma\right)-n\right).$$
Let us now call \emph{branch datum} a 5-tuple
$$\left(\Sigmatil,\Sigma,d,n,\pi_1,\ldots,\pi_n\right)$$
and let us say it is \emph{compatible} if it satisfies the Riemann-Hurwitz relation.
(For a non-orientable $\Sigmatil$ and/or $\Sigma$ this relation
should actually be complemented with certain other necessary conditions,
but we restrict to an orientable $\Sigma$ in this paper, so we do not
spell out these conditions here.)

\paragraph{The Hurwitz problem}
The very old \emph{Hurwitz problem} asks which compatible branch data are
\emph{realizable} (namely, associated to some existing surface branched cover)
and which are \emph{exceptional} (non-realizable).
Several partial solutions
to this problem have been obtained over the time, and we quickly
mention here the fundamental~\cite{EKS}, the survey~\cite{Bologna}, and
the more recent~\cite{Pako, PaPe, PaPebis, CoPeZa, SongXu}.
In particular, for an orientable $\Sigma$ the problem has been shown
to have a positive solution whenever $\Sigma$ has positive genus.
When $\Sigma$ is the sphere $S$, many realizability and exceptionality
results have been obtained (some of experimental nature), but the general
pattern of what data are realizable remains elusive. One guiding
conjecture in this context is that \emph{a compatible branch datum is always
realizable if its degree is a prime number}. It was actually shown in~\cite{EKS}
that proving this conjecture in the special case of $3$ branching
points would imply the general case. This is why many efforts have
been devoted in recent years to investigating the realizability
of compatible branch data with base surface $\Sigma$ the sphere $S$ and having $n=3$
branching points. See in particular~\cite{PaPe, PaPebis} for some evidence
supporting the conjecture.

\paragraph{Hurwitz numbers}
Two branched covers
$$f_1:\Sigmatil\to\Sigma\qquad f_2:\Sigmatil\to\Sigma$$
are said to be \emph{weakly equivalent} if there exist homeomorphisms $\gtil:\Sigmatil\to\Sigmatil$
and $g:\Sigma\to\Sigma$
such that $f_1\compo\gtil=g\compo f_2$, and \emph{strongly equivalent} if
the set of branching points in $\Sigma$ is fixed once and forever and
one can take $g=\textrm{id}_\Sigma$.
The \emph{(weak or strong) Hurwitz number} of a compatible
branch datum is the number of (weak or strong) equivalence classes of branched covers
realizing it. So the Hurwitz problem can be rephrased as the question whether a
Hurwitz number is positive or not (a weak Hurwitz number can be smaller
than the corresponding strong one, but they can only vanish simultaneously).
Long ago Mednykh in~\cite{Medn1, Medn2} gave some formulae for the computation
of the strong Hurwitz numbers,
but the actual implementation of these formulae is rather elaborate in general.
Several results were also obtained in more recent years in~\cite{GKL, KM1, KM2, KML, MSS}.

\paragraph{Computations}
In this paper we consider branch data of the form
$$\leqno{(\heartsuit)}
\left(\Sigmatil,\Sigma=S,d=2k,n=3,
(2,\ldots,2),(2h+1,1,2,\ldots,2),\pi=\left(d_i\right)_{i=1}^\ell\right)$$
for $h\geqslant0$.  A direct computation shows that such a datum is compatible
for $h\geqslant 2g$, where $g$ is the genus of $\Sigmatil$, and $\ell=h-2g+1$.
We compute the weak Hurwitz number of the datum for the values of
$g,2h+1,\ell$ shown in boldface in Table~\ref{roadmap:tab}.
\begin{table}
\begin{center}
\begin{tabular}{c||c|c|c|c}
             &   $g=0$                 &   $g=1$                 &     $g=2$               &     $g=3$      \\ \hline\hline
$2h+1=1$     &   $\ell$\textbf{\,=\,1} &   ---                   &     ---                 &     ---        \\ \hline
$2h+1=3$     &   $\ell$\textbf{\,=\,2} &   ---                   &     ---                 &     ---        \\ \hline
$2h+1=5$     &   $\ell$\textbf{\,=\,3} &   $\ell$\textbf{\,=\,1} &     ---                 &     ---        \\ \hline
$2h+1=7$     &   $(\ell=4)$            &   $\ell$\textbf{\,=\,2} &     ---                 &     ---        \\ \hline
$2h+1=9$     &   $(\ell=5)$            &   $(\ell=3)$            &   $\ell$\textbf{\,=\,1} &     ---        \\ \hline
$2h+1=11$    &   $(\ell=6)$            &   $(\ell=4)$            &   $(\ell=2)$            &     ---        \\ \hline
$2h+1=13$    &   $(\ell=7)$            &   $(\ell=5)$            &   $(\ell=3)$            &     $(\ell=1)$
\end{tabular}
\end{center}
\mycap{Values of $g,h,\ell$ giving compatible data $(\heartsuit)$. \label{roadmap:tab}}
\end{table}
More values could be obtained,
including for instance those within parentheses in the table, using the same techniques as we
employ below, but the complication of the topological and
combinatorial situation grows very rapidly, and the arithmetic
formulae giving the weak Hurwitz numbers are likely to be rather intricate
for larger values of $g$ and $h$.

For brevity we will henceforth denote by $\nu$ the number of weakly inequivalent realizations of $(\heartsuit)$
for any given values of $g$ and $h$.

\begin{thm}\label{genus0:thm}
For $g=0$ and $0\leqslant h\leqslant 2$ there hold:
\begin{itemize}
\item For $h=0$, whence $\ell=1$ and $\pi=(2k)$, we always have $\nu=1$;
\item For $h=1$, whence $\ell=2$ and $\pi=(p,2k-p)$ with $p\leqslant k$,
we have $\nu=1$ for $p<k$, and $\nu=0$ for $p=k$;
\item For $h=2$, whence $\ell=3$, we have:
\begin{itemize}
\item[(i)] $\nu=0$ if $k=3m$ and $\pi=(2m,2m,2m)$;
\item[(ii)] $\nu=0$ if $k=2m$ and $\pi=(2m,m,m)$;
\item[(iii)] $\nu=1$ if $\pi=(2t,k-t,k-t)$ with
\begin{itemize}
\item[(a)] $1\leqslant t<\frac k3$, or
\quad (b) $\frac k3<t<\frac k2$, or
\quad (c) $\frac k2<t<k$;
\end{itemize}
\item[(iv)] $\nu=1$ if $\pi=(k,k-r,r)$ with $1\leqslant r<\frac k2$;
\item[(v)] $\nu=2$ if $\pi=(2k-q-r,q,r)$ with $1\leqslant r<\frac k2$ and $r<q<k-r$;
\item[(vi)] $\nu=3$ if $\pi=(2k-q-r,q,r)$ with
\begin{itemize}
\item[(a)] $1\leqslant r<\frac k2$ and $k-r<q<k-\frac r2$, or
\item[(b)] $\frac k2\leqslant r<\frac 23k$ and $r<q<k-\frac r2$.
\end{itemize}
\end{itemize}
\end{itemize}
\end{thm}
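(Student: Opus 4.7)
The plan is to translate the enumeration into a combinatorial problem about embedded bipartite graphs (dessins d'enfant). To a cover $f:\Sigmatil\to S$ with branching points $0,1,\infty\in S$ one associates the preimage $f^{-1}([0,1])\subset\Sigmatil$, a bipartite graph whose black vertices over $0$ have degrees $(2,\ldots,2)$, white vertices over $1$ have degrees $(2h+1,1,2,\ldots,2)$, and faces (over $\infty$) have boundary lengths $2d_i$. Weak equivalence corresponds to isotopy, so $\nu$ is the number of such dessins in $\Sigmatil$ up to homeomorphism. Since every black vertex has degree $2$, I contract each one to an edge midpoint; I then contract every degree-$2$ white vertex likewise. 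The resulting \emph{skeleton graph} $G_h$ has one vertex $v$ of degree $2h+1$, one vertex $w$ of degree $1$, $h$ loops at $v$, and an edge $vw$. Recovering the dessin from an embedding $G_h\hookrightarrow\Sigmatil$ amounts to choosing, for each of the $h+1$ edges $e$ of $G_h$, a positive integer $\ell_e\geqslant 1$ (the number of black vertices along $e$), with $\sum_e\ell_e=k$.

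For $h=0$ and $h=1$ the skeleton admits a unique planar embedding---a segment and a lollipop respectively---and the counts reduce to one-line computations of face boundary lengths. For $h=2$, which is the substantive case, the vertex $v$ has five incident half-edges, namely the pairs $(a_1,a_2),(b_1,b_2)$ from the loops and the stick end $c$. Modulo rotation, reflection, swap of the two ends of each loop, and swap of the two loops, the cyclic orders at $v$ fall into three classes: \emph{separated} $(c,a_1,a_2,b_1,b_2)$, \emph{nested} $(c,a_1,b_1,b_2,a_2)$, and \emph{interleaved} $(c,a_1,b_1,a_2,b_2)$. A short orbit computation of the face permutation $\sigma\iota$ shows that the interleaved class has a single face and so realizes a torus, while the other two classes each produce three faces and embed in the sphere. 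Reading off face boundary lengths I would obtain, with $t_1,t_2\geqslant 1$ for the two loop lengths, $s\geqslant 1$ for the stick, and $t_1+t_2+s=k$:
\begin{itemize}
\item separated embedding: $\pi=\{t_1,\,t_2,\,t_1+t_2+2s\}$;
\item nested embedding (canonically with $c$ outside the outer loop): $\pi=\{t_2,\,t_1+t_2,\,t_1+2s\}$.
\end{itemize}

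From here the theorem reduces to a dictionary and elementary bookkeeping. The separated embedding contributes one realization of a given $\pi$ precisely when $\max\pi>k$, the maximum part playing the role of $t_1+t_2+2s$ and the other two (unordered, since the two loops are interchangeable) the role of $\{t_1,t_2\}$. The nested embedding contributes one realization for each ordered assignment $(A,B,C)$ of the parts of $\pi$ to the slots (inner face, middle face, outer face) satisfying $A<B\leqslant k-1$; these two inequalities are equivalent to $t_1=B-A\geqslant 1$ and $s=k-B\geqslant 1$ respectively. Running this dictionary through the cases of the statement---which are distinguished by whether $\max\pi$ is greater than, equal to, or less than $k$, and by how many permutations of the three parts of $\pi$ satisfy the two inequalities---yields $\nu$ in each case. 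The main obstacle is not any single verification but rather the combinatorial bookkeeping: confirming that the three cyclic-order classes genuinely exhaust the planar embeddings of $G_2$, matching the subranges of $t$ in case (iii) and of $(q,r)$ in cases (v)--(vi) to the feasibility windows $\max\pi>k$ and $A<B\leqslant k-1$, and correctly handling those $\pi$ with repeated parts, where naively counting orderings would overcount.
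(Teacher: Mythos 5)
Your setup and structural analysis coincide with the paper's: your skeleton graph $G_2$ with its three rotation-system classes at the $5$-valent vertex reproduces exactly the paper's two spherical embeddings $\primo(a,b,c)$ (your ``separated'', realizing $(2s+t_1+t_2,t_1,t_2)$ with the loop-swap symmetry) and $\secon(a,b,c)$ (your ``nested'', realizing $(2s+t_1,t_1+t_2,t_2)$ with no symmetry), the interleaved class being discarded as toral. I checked your dictionary --- ``separated contributes $1$ iff $\max\pi>k$'' and ``nested contributes one dessin per value pair $A<B\leqslant k-1$ with $\{A,B,2k-A-B\}=\pi$'' --- against all cases (i)--(vi), and it produces the stated values of $\nu$, including the repeated-part subcases of (iii) once orderings are not overcounted, as you anticipate. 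So the approach and the arithmetic are sound.

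There is, however, one genuine gap: your opening assertion that weak equivalence of covers corresponds to homeomorphism of dessins is false when two of the three partitions of $d$ coincide. A sphere automorphism permuting the branching points then induces extra moves on dessins (switching vertex colours, or replacing $\Gamma$ by a ``dual'' graph built from the regions), and two dessins that are not homeomorphic can nevertheless represent weakly equivalent covers. Within the range of this theorem the dangerous datum is $k=2h$, i.e.\ $h=2$, $k=4$, $\pi=(5,2,1)$, where $\pi$ equals the partition $(2h+1,1,2,\ldots,2)$: your count finds one separated and one nested dessin and case (v) asserts $\nu=2$, but without further argument this $2$ is only an upper bound for the Hurwitz number. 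The paper closes this by verifying that the region-exchange move carries each of the two dessins to itself, so they remain inequivalent; your proposal needs the same verification (the other repeated-partition data in range, those with $k=h+1$, have $\nu\in\{0,1\}$ with at most one dessin, so they are harmless). Apart from this, what remains in your write-up is the bookkeeping you yourself flag, which is routine.
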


\begin{thm}\label{genus1:thm}
For $g=1$ and $2\leqslant h\leqslant 3$ there hold:
\begin{itemize}
\item For $h=2$, whence $\ell=1$, we have $\nu=\left[\frac14(k-1)^2\right]$;
\item For $h=3$, whence $\ell=2$ and $\pi=(p,2k-p)$ with $p\leqslant k$, we have:
\begin{itemize}
\item for $p=k$
$$\nu=\frac12\left[\frac k2\right]\left(\left[\frac k2\right]-1\right)
+\left[\frac14\left[\frac{k-1}2\right]^2\right];$$
\item for $p<k$
\begin{eqnarray*}
  \nu &=& \left[\frac14(p-1)^2\right] +\left[\frac p2\right]\left(\left[\frac p2\right]-1\right)+(k-3)(k-p-1)\\
      & & +\left[\frac14\left(k-\left[\frac{p}2\right]-1\right)^2\right] -\left[\frac{k-p}2\right]
           +\left[\frac14\left[\frac {p-1}2\right]^2\right].
\end{eqnarray*}
\end{itemize}
\end{itemize}
\end{thm}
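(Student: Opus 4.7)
The plan is to apply Grothendieck's dessins d'enfant as announced in the introduction. For a cover realizing~$(\heartsuit)$, let $\calG\subset\Sigmatil$ denote the preimage of a simple arc in $S$ joining the branching points corresponding to the first two partitions. Then $\calG$ is a bipartite graph with $k$ black vertices all of degree~$2$ and white vertices of degrees $2h+1$, $1$, and $k-h-1$ further copies of~$2$. Suppressing all degree-$2$ vertices yields a reduced graph $\Gamma$ with exactly two vertices $W_1$ of degree $2h+1$ and $W_0$ of degree $1$, hence $h$ loops at $W_1$ and a single bridge $W_1W_0$. Each of the $h+1$ edges of $\Gamma$ carries a positive-integer weight $m_i$ recording the number of suppressed black vertices on it, with $\sum_{i=0}^h m_i=k$. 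By Euler's formula on the torus, the embedding has exactly $\ell=h-1$ faces, whose degrees are prescribed linear combinations of the $m_i$ determined by the combinatorial map and must realize the partition $\pi$.

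The enumeration then splits into two steps: (i)~classify the inequivalent embeddings of $\Gamma$ on the torus with $\ell$ faces, modulo self-homeomorphisms of the torus and the natural automorphisms of $\Gamma$ (loop permutations, loop reversals, and, when $\pi$ has repeated parts, face permutations induced by weak equivalence); (ii)~for each embedding, count positive-integer weight vectors $(m_0,\ldots,m_h)$ summing to $k$ that yield the required face degrees. For $h=2$, $\pi=(2k)$, the face-degree condition reduces to the tautology $2\sum m_i=2k$, so after the embedding classification the problem becomes the enumeration of triples $(m_0,m_1,m_2)$ of positive integers with $m_0+m_1+m_2=k$ modulo the swap $m_1\leftrightarrow m_2$ of the two loops; a direct calculation gives $\nu=\left[\tfrac14(k-1)^2\right]$.

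For $h=3$, $\pi=(p,2k-p)$, we enumerate the inequivalent two-face embeddings of the four-edge graph (one bridge, three loops) on the torus. Each such embedding determines a pair of linear forms $\sum_i \mu_{F_j}(e_i)\,m_i$ for the two face degrees, with coefficients $\mu_{F_j}(e_i)\in\{0,1,2\}$ satisfying $\mu_{F_1}(e_i)+\mu_{F_2}(e_i)=2$ for every edge~$e_i$; solving these Diophantine constraints for positive-integer weights realizing $(p,2k-p)$ produces each of the displayed summands as a standard lattice-point count. The dichotomy $p=k$ versus $p<k$ reflects the additional $\matZ/2$ face-swap symmetry that arises when the two faces have equal degree~$k$: generic orbits contribute with a factor of $\tfrac12$, explaining the leading term $\tfrac12\left[\tfrac k2\right]\left(\left[\tfrac k2\right]-1\right)$, while embeddings fixed by the swap contribute the ``diagonal'' term $\left[\tfrac14\left[\tfrac{k-1}2\right]^2\right]$.

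The main obstacle is the $h=3$ case: classifying the combinatorial map structures on the torus that realize the two-vertex, four-edge graph with two faces, and correctly determining the stabilizer of each embedding under the group generated by the symmetric group on the three loops and, when $p=k$, the face swap. Once this classification is complete, the remaining bookkeeping is a routine (if delicate) arithmetic count; the various subcases of Theorem~\ref{genus1:thm} correspond exactly to the possible combinatorial ``shapes'' of the face-degree linear forms arising from the inequivalent embeddings.
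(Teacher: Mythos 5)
Your overall strategy coincides with the paper's: pass to the dessin d'enfant, contract the valence-$2$ vertices to get a two-vertex graph ($h$ loops plus one pendant edge) with positive integer weights summing to $k$, classify its embeddings in $T$ with the prescribed number of discal faces, and count weight vectors up to the symmetries of each embedding. The $h=2$ case is handled correctly. But for $h=3$ the proposal stops exactly where the substance of the proof begins: you never produce the classification of the two-face embeddings of the $(7,1)$ graph in $T$, nor their symmetry groups, nor the resulting pairs of face-degree linear forms. The paper obtains these by first showing there are exactly three embeddings of the bouquet of three circles in $T$ with two discal regions (with symmetry groups of orders $12$, $4$ and $2$), then attaching the pendant edge to get seven inequivalent embeddings $\primo$--$\setti$, each realizing an explicit pair such as $(a+b+c+2d,\,a+b+c)$ or $(2a+2b+c,\,c+2d)$, with an $a\leftrightarrow b$ symmetry in exactly four of them. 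Every one of the six summands in the $p<k$ formula is the lattice-point count attached to one of these seven configurations, so without the classification the stated formula can be neither derived nor checked; calling this step ``the main obstacle'' and deferring it leaves the proof with a genuine gap rather than a routine omission.

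Two further points would need repair even granting the classification. First, your explanation of the $p=k$ case (generic orbits halved by a face swap, plus a ``diagonal'' fixed-point term) does not match what actually happens: the two summands come from two specific embeddings (the paper's $\secon$ and $\quart$, the only ones whose two face degrees can be simultaneously equal to $k$), and for $\secon$ the halving arises because $a+b+2c=a+b+2d=k$ forces $c=d$, not from an orbit-counting factor of $\tfrac12$; the $\quart$ term is a full count, not a fixed-point correction. Second, the datum with repeated partitions $(T,S,8,3,(2,2,2,2),(7,1),(7,1))$ (i.e.\ $k=4$, $p=1$) requires verifying that the move of Fig.~\ref{DAmove:fig} exchanging the roles of $\pi_2$ and $\pi_3$ does not identify distinct dessins; your mention of ``face permutations when $\pi$ has repeated parts'' concerns the internal repetition $\pi=(k,k)$ and does not cover this separate check, which the paper carries out explicitly in Fig.~\ref{selfdual:fig}.
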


\begin{thm}\label{genus2:thm}
For $g=2$ and $h=4$, whence $\ell=1$, we have
$$\nu=\frac{k-1}{16}\left(7k^3 - 63k^2 + 197k - 208\right)+
\frac58(5 - 2k)\cdot\left[\frac k2\right].$$
\end{thm}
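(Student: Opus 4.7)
My plan is to follow the dessin d'enfant approach used in the previous theorems. A realization of $(\heartsuit)$ for $g=2$, $h=4$, $\ell=1$ corresponds, up to weak equivalence, to a bipartite graph $D$ embedded in the genus-$2$ surface $\Sigmatil$, with $k$ black vertices all of valence $2$, with $k-3$ white vertices split as one of valence $9$, one of valence $1$, and $k-5$ of valence $2$, and with a single face (because $\pi=(2k)$); weak equivalence amounts to isomorphism of such embedded graphs up to arbitrary self-homeomorphisms of $\Sigmatil$, possibly orientation-reversing. Suppressing the valence-$2$ vertices of both colors, I would reduce $D$ to a graph $G$ with only two vertices, a vertex $v$ of valence $9$ and a vertex $w$ of valence $1$, whose five edges consist of a tail joining $v$ to $w$ together with four loops at $v$. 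Each edge of $G$ acquires a positive integer length (the number of black vertices of $D$ that it contains), yielding a tuple $(a;b_1,b_2,b_3,b_4)$ with $a+b_1+b_2+b_3+b_4=k$.

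The next step is to enumerate the combinatorial types of such dessins, namely the cyclic orderings of the nine half-edges at $v$ yielding a single-face embedding in $\Sigmatil$, modulo the action of the dihedral group $D_9$ on the $9$-cycle together with the graph automorphism group $(\matZ/2)^4 \rtimes S_4$ of $G$ acting on the loop half-edges. Since the tail bounces back at $w$ without subdividing any face, the single-face condition is equivalent to requiring that the induced pairing of the eight non-tail half-edges around $v$ yield a one-face ribbon graph on the four-loop bouquet; the Harer--Zagier formula delivers exactly $21$ such pairings. Combined with the nine available positions for the tail insertion, these produce (after the symmetry quotient) a finite list of combinatorial types, each equipped with a residual stabilizer $\Gamma \leqslant S_4$ acting on the loop indices.

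For each type I would count via Burnside's lemma the number of $\Gamma$-orbits on the set of length tuples $(a;b_1,b_2,b_3,b_4)$ with $a,b_i\geqslant 1$ and $a+b_1+b_2+b_3+b_4=k$. The polynomial term $\tfrac{k-1}{16}(7k^3-63k^2+197k-208)$ in the statement is expected to arise from the types with trivial stabilizer, while the correction $\tfrac{5}{8}(5-2k)\left[\frac{k}{2}\right]$ should reflect the parity-dependent fixed-point counts of types whose stabilizer swaps two of the $b_i$, giving a count that jumps by $1$ whenever $k$ crosses a multiple of $2$. The principal obstacle lies entirely in the classification above: matching each of the $21$ genus-$2$ pairings with its stabilizer in $D_9$ and in the graph automorphism group, tracking the tail-insertion positions and their equivalences, and assembling the resulting orbit sums into the stated closed form. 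Once this bookkeeping is set up, the numerical coincidence $\nu=13$ forced by the unique tuple $(1,1,1,1,1)$ at the smallest case $k=5$ provides an immediate consistency check with the final formula.
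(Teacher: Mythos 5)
Your framework is exactly the paper's: pass to dessins d'enfant, suppress the valence-2 vertices to get a $(9,1)$ graph with five weighted edges (a tail plus four loops at the valence-9 vertex, weights summing to $k$), classify its one-face embeddings in the genus-2 surface up to homeomorphism, and count weight tuples up to the residual symmetries of each embedding. Your Harer--Zagier count of $21$ one-face pairings of the four-loop bouquet is correct and consistent with the paper, which instead classifies by hand the realizations of the genus-2 surface as an octagon with identified edges and finds exactly $4$ of them, with stabilizers of orders $4$, $2$, $2$, $16$ in the dihedral group of order $16$ (indeed $16/4+16/2+16/2+16/16=21$). One small slip: there are $8$ gaps, not $9$, into which the tail can be inserted in the cyclic order of the eight loop half-edges.

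The genuine gap is that your argument stops where the paper's work begins. You explicitly defer as ``the principal obstacle'' the matching of the pairings with their stabilizers, the tracking of tail insertions, and the Burnside computation --- but this \emph{is} the proof. The paper finds $13$ inequivalent embeddings, of which $8$ have no symmetry (each contributing $\binom{k-1}{4}$) and $5$ carry a single involution of the form $(a,b,c,d,e)\leftrightarrow(b,a,d,c,e)$ or $(a,b,c,d,e)\leftrightarrow(d,c,b,a,e)$; each of the latter contributes the same parity-dependent quantity, evaluated by explicit nested sums split over the parity of $k$. Without producing the number $13$, the $8/5$ split, the identification of the involutions, and the closed-form evaluation of the symmetric contribution, the stated formula is not established; the check $\nu=13$ at $k=5$ only pins down the total number of types, not their symmetry structure. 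Also, your attribution of the quartic polynomial term entirely to the asymmetric types is off: by Burnside each symmetric type contributes $\frac12\binom{k-1}{4}$ plus half its fixed-tuple count, so the leading part of the formula reflects $\frac{21}{2}\binom{k-1}{4}$ rather than $8\binom{k-1}{4}$, and the term $\frac58(5-2k)\left[\frac k2\right]$ absorbs only part of the symmetric contribution.
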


\section{Computation of weak Hurwitz numbers\\ via dessins d'enfant}\label{DA:sec}
Dessins d'enfant were introduced in~\cite{Groth} (see also~\cite{Cohen}) and
have been already exploited to give partial answers to the Hurwitz problem~\cite{LZ, Bologna}.
Here we show how to employ them to compute weak Hurwitz numbers.
Let us fix until further notice a branch datum
$$\leqno{(\spadesuit)}
\left(\Sigmatil,\Sigma=S,d,n=3,
\pi_1=\left(d_{1i}\right)_{i=1}^{\ell_1},
\pi_2=\left(d_{2i}\right)_{i=1}^{\ell_2},
\pi_3=\left(d_{3i}\right)_{i=1}^{\ell_3}\right).$$
A graph $\Gamma$ is \emph{bipartite} if it has black and white
vertices, and each edge joins black to white. If $\Gamma$ is
embedded in $\Sigmatil$ we call \emph{region} a component $R$ of
$\Sigmatil\setminus\Gamma$, and
\emph{length} of $R$ the number of white (or black) vertices of
$\Gamma$ to which $R$ is incident, with multiplicity
($\partial R$ can be parameterized as a union of locally injective closed
possibly non-simple curves, and the multiplicity of a vertex for
$R$ is the number of times $\partial R$ goes through it).
A pair $(\Gamma,\sigma)$ is called \emph{dessin d'enfant} representing $(\spadesuit)$
if $\sigma\in\permu_3$ and $\Gamma\subset\Sigmatil$ is a bipartite graph
such that:
\begin{itemize}
\item The black vertices of $\Gamma$ have valence $\pi_{\sigma(1)}$;
\item The white vertices of $\Gamma$ have valence $\pi_{\sigma(2)}$;
\item The regions of $\Gamma$ are topological discs and have length $\pi_{\sigma(3)}$.
\end{itemize}
We will also say that $\Gamma$ \emph{represents $(\spadesuit)$ through} $\sigma$.

\begin{rem}
\emph{If $f:\Sigmatil\to S$ is a branched cover matching $(\spadesuit)$
and $\alpha$ is a segment in $S$ with a black and a white end
at the branching points corresponding to $\pi_1$ and $\pi_2$, then
$\left(f^{-1}(\alpha),\textrm{id}\right)$  represents
$(\spadesuit)$, with vertex colours of $f^{-1}(\alpha)$ lifted via $f$.}
\end{rem}

\begin{prop}\label{from:Gamma:to:f:prop}
To a dessin d'enfant $(\Gamma,\sigma)$ representing $(\spadesuit)$
one can associate a branched cover $f:\Sigmatil\to S$
realizing $(\spadesuit)$, well-defined up to equivalence.
\end{prop}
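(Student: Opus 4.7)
My plan is to build $f$ by first cell-decomposing $S$ in a way adapted to the three branching points, and then specifying $f$ region by region on $\Sigmatil$. In $S$ I fix three distinct points $B,W,P$, to serve as the branching points carrying the partitions $\pi_{\sigma(1)},\pi_{\sigma(2)},\pi_{\sigma(3)}$ respectively, together with an embedded arc $\alpha$ from $B$ to $W$ missing $P$. The complement $\overline{D}:=S\setminus\mathring{\alpha}$ is then a closed disc whose boundary circle is obtained from $\alpha$ traversed on its two sides and whose interior contains $P$; this puts on $S$ a CW structure with $0$-cells $\{B,W\}$, one $1$-cell $\alpha$, and one $2$-cell $D$.

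On $\Gamma$ I define $f$ in the only reasonable way suggested by the dessin: each black vertex is sent to $B$, each white vertex to $W$, and each edge of $\Gamma$ is mapped homeomorphically onto $\alpha$. For a region $R$ of $\Sigmatil\setminus\Gamma$, which by hypothesis is an open disc, the boundary map $\partial\overline{R}\to\partial\overline{D}$ induced so far is a covering between circles of degree equal to the length $m$ of $R$ (a part of $\pi_{\sigma(3)}$). I then extend this boundary map to a branched cover $\overline{R}\to\overline{D}$ of degree $m$ with a single interior ramification point of order $m$ lying over $P$. Such an extension exists and is unique up to a self-homeomorphism of $\overline{R}$ fixing $\partial\overline{R}$ pointwise, by the standard classification of branched covers of a disc over a disc with prescribed boundary monodromy. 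Gluing all these region-wise maps along $\Gamma$ produces a continuous $f\colon\Sigmatil\to S$.

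I would then check that $f$ realizes $(\spadesuit)$: its only branching points are $B,W,P$, and the local degrees over them are, respectively, the valences of the black vertices, the valences of the white vertices, and the lengths of the regions of $\Gamma$, which by hypothesis give the partitions $\pi_{\sigma(1)},\pi_{\sigma(2)},\pi_{\sigma(3)}$, i.e.\ a permutation of $(\pi_1,\pi_2,\pi_3)$. For well-definedness up to equivalence, two choices of arc $\alpha$ differ by an ambient isotopy of $S$ relative to $\{B,W,P\}$, and two choices of branched-cover extension over a region differ by a self-homeomorphism of $\Sigmatil$ fixing $\Gamma$ pointwise; both ambiguities are exactly what the equivalence relation on covers absorbs, and since a weak equivalence allows an arbitrary self-homeomorphism of $S$ the permutation $\sigma$ is absorbed as well.

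The step I expect to be the main obstacle is the region-wise extension: showing that a prescribed degree-$m$ unbranched covering of circles $\partial\overline{R}\to\partial\overline{D}$ can be filled in, uniquely up to homeomorphism of $\overline{R}$ rel boundary, to a degree-$m$ branched cover of discs with a single interior branch point of full order. This is a classical but genuinely topological input, and it is the one piece of the argument that is not essentially formal.
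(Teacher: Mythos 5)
Your proposal is correct and follows essentially the same route as the paper: define $f$ on $\Gamma$ by sending black vertices to one branching point, white vertices to another, each edge homeomorphically onto the arc $\alpha$, and then fill in each discal region of length $m$ by the degree-$m$ branched cover of the complementary disc with a single interior ramification point of full order, unique up to a homeomorphism of the region (the paper phrases this step as choosing a point $y\in R$ mapped locally by $z\mapsto z^m$ and requiring a genuine $m:1$ cover from $R\setminus\{y\}$ onto the complement of $\alpha$ and the third point). The well-definedness argument is also the same: the choice of the three points and of $\alpha$ is absorbed by post-composition with an automorphism of $S$, and the edge-wise and region-wise ambiguities by an automorphism of $\Sigmatil$ preserving $\Gamma$.
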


\begin{proof}
We choose distinct points $x_1,x_2,x_3\in S$ and a segment $\alpha$
joining $x_1$ to $x_2$. Then we define $f$ on
$\Gamma$ such that the black vertices are mapped to $x_1$
and the white ones to $x_2$, and $f$ restricted to any edge
is a homeomorphism onto $\alpha$. For each region $R$ of $\Gamma$,
assuming $R$ has length $m$, we fix a point $y$ in $R$ and we extend $f$ so
that:
\begin{itemize}
\item $f$ maps $y$ to $x_3$ locally as
$(\matC,0)\ni z\mapsto z^m\in(\matC,0)$;
\item $f$ is continuous on the closure of $R$;
\item $f$ is a genuine $m:1$ cover from $R\setminus\{y\}$ to $S\setminus(\alpha\cup\{x_3\})$.
\end{itemize}
This construction is of course possible and gives a realization of $(\spadesuit)$
with local degrees $\pi_{\sigma(j)}$ over $x_j$. To see that $f$ is well-defined
up to equivalence we first note that the choice of $x_1,x_2,x_3,\alpha$ is immaterial
up to post-composition with automorphisms of $S$. Now if $f_1,f_2$ are constructed
as described for the same $x_1,x_2,x_3,\alpha$ we can first define an automorphism
$\gtil$ of $\Gamma$ which is the identity on the vertices and given by $f_1^{-1}\compo f_2$
on each edge. Now suppose that to define $f_j$ on a region $R$ of $\Gamma$,
we have chosen the point $y_j\in R$. Then we can define a homeomorphism
$\gtil:R\setminus\{y_2\}\to R\setminus\{y_1\}$ so that $f_1\compo\gtil =f_2$ on $R\setminus\{y_2\}$.
Setting $\gtil(y_2)=y_1$ and patching $\gtil$ with that previously defined on $\Gamma$ we get
the desired equality $f_1\compo\gtil =f_2$ on the whole of $\Sigmatil$.\end{proof}

We define an equivalence relation $\sim$ on dessins d'enfant generated by:
\begin{itemize}
\item $(\Gamma_1,\sigma_1)\sim(\Gamma_2,\sigma_2)$ if $\sigma_1=\sigma_2$ and
there is an automorphism $\gtil:\Sigmatil\to\Sigmatil$ such that
$\Gamma_1=\gtil\left(\Gamma_2\right)$ matching colours;
\item $(\Gamma_1,\sigma_1)\sim(\Gamma_2,\sigma_2)$ if $\sigma_1=\sigma_2\compo(1\,2)$ and
$\Gamma_1=\Gamma_2$ as a set but with vertex colours switched;
\item $(\Gamma_1,\sigma_1)\sim(\Gamma_2,\sigma_2)$ if $\sigma_1=\sigma_2\compo(2\,3)$ and
$\Gamma_1$ has the same black vertices as $\Gamma_2$ and
for each region $R$ of $\Gamma_2$ we have that $R\cap\Gamma_1$ consists
of one white vertex and disjoint edges joining this vertex with the black vertices
on the boundary of $R$ (see Fig.~\ref{DAmove:fig} for an example).
\begin{figure}
    \begin{center}
    \includegraphics[scale=.6]{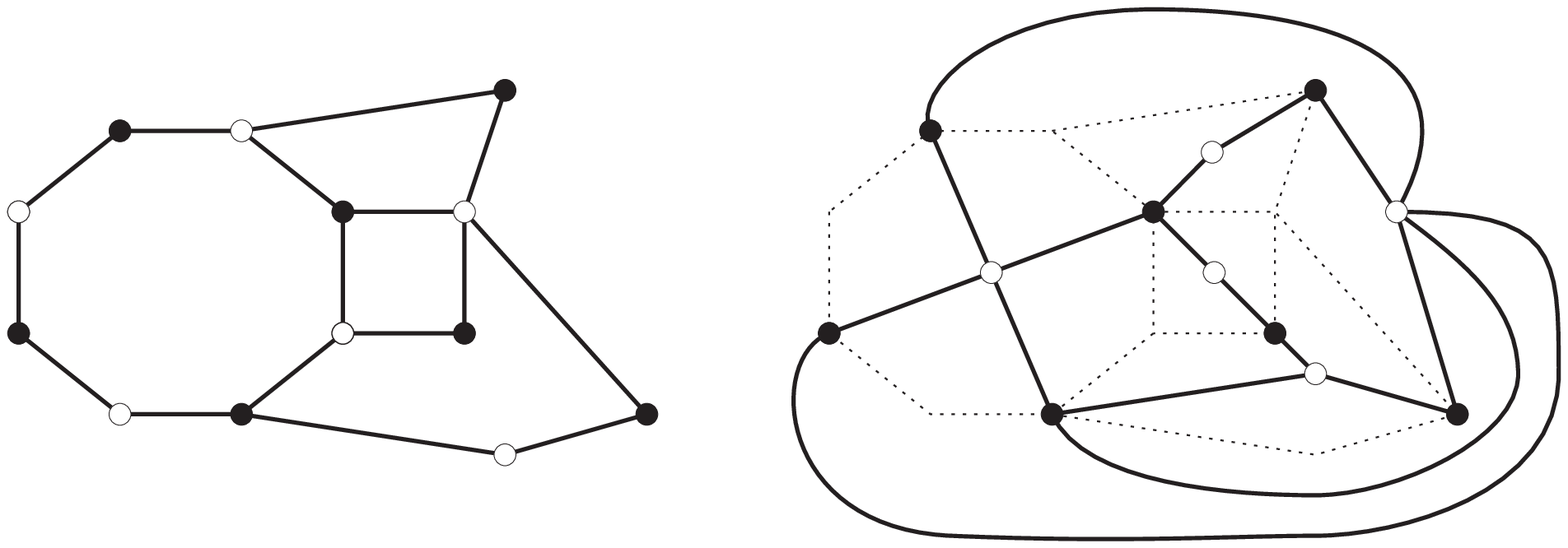}
    \end{center}
\mycap{Dessins d'enfant representing equivalent branched covers. \label{DAmove:fig}}
\end{figure}
\end{itemize}

\begin{thm}\label{equiv:Gamma:for:equiv:f:thm}
The branched covers associated to dessins d'enfant $(\Gamma_1,\sigma_1)$ and
$(\Gamma_2,\sigma_2)$
as in Proposition~\ref{from:Gamma:to:f:prop} are equivalent if and only if
$(\Gamma_1,\sigma_1)\sim(\Gamma_2,\sigma_2)$.
\end{thm}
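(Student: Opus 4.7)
The plan is to establish both implications. For the ``if'' direction, it suffices to verify that each of the three generating moves of $\sim$ preserves weak equivalence of associated covers. The first move is handled directly: if $\gtil:\Sigmatil\to\Sigmatil$ takes $\Gamma_2$ onto $\Gamma_1$ preserving colors, then feeding both dessins into the construction of Proposition~\ref{from:Gamma:to:f:prop} with the same choice of $x_1,x_2,x_3,\alpha$ produces covers that agree on $\Gamma_1$ after composition with $\gtil$, and a region-by-region patching exactly as in the proof of Proposition~\ref{from:Gamma:to:f:prop} upgrades $\gtil$ to a weak equivalence. For the second move, I would use an orientation-preserving involution $g$ of $S$ fixing $x_3$, exchanging $x_1$ and $x_2$, and fixing $\alpha$ setwise; the induced colour swap on $f_2^{-1}(\alpha)$ matches the substitution $\sigma\mapsto\sigma\compo(1\,2)$. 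For the third move, I would instead take an involution $g$ of $S$ swapping $x_2,x_3$ and observe that applying the construction of Proposition~\ref{from:Gamma:to:f:prop} to $(\Gamma_2,\sigma_2)$ but with $\alpha$ replaced by a segment $\alpha'$ from $x_1$ to $x_3$ yields the same $f_2$, while $f_2^{-1}(\alpha')$, inside each region $R$ of $\Gamma_2$, is a star-shaped graph with center at the point over $x_3$ and leaves at the black vertices on $\partial R$; this is precisely the combinatorial description of Move~3.

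For the ``only if'' direction, suppose $f_1\compo\gtil=g\compo f_2$. Then $g$ induces a permutation $\tau\in\permu_3$ of the branching points $\{x_1,x_2,x_3\}$. Since $\permu_3$ is generated by $(1\,2)$ and $(2\,3)$, invoking the ``if'' direction I would apply a suitable sequence of Moves~2 and~3 to $(\Gamma_2,\sigma_2)$, reducing to the case $\tau=\mathrm{id}$ and $\sigma_1=\sigma_2$. Next, because any self-homeomorphism of $S$ fixing $\{x_1,x_2,x_3\}$ setwise can be isotoped so as to fix the segment $\alpha$ pointwise, and because such an isotopy lifts through the unbranched restriction $f_1\colon\Sigmatil\setminus f_1^{-1}(\{x_1,x_2,x_3\})\to S\setminus\{x_1,x_2,x_3\}$ to an isotopy of $\gtil$ extending continuously over the branching preimages, I may replace $(g,\gtil)$ by an equivalent pair in which $g$ fixes $\alpha$ pointwise. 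Then $\gtil$ carries $\Gamma_2=f_2^{-1}(\alpha)$ to $\Gamma_1=f_1^{-1}(\alpha)$ preserving colours, which is the first generator of $\sim$.

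The hard part will be the ``only if'' direction, specifically the isotopy-and-lift argument that adjusts $g$ to fix $\alpha$ pointwise while preserving the commutative square. A secondary but nontrivial point is justifying the geometric claim underlying Move~3: inside a region of length $m$ the local model $z\mapsto z^m$ of $f_2$ near the point over $x_3$ shows that $f_2^{-1}$ of a short radial segment ending at $x_3$ consists of $m$ disjoint radial arcs, and one must check that across all regions these arcs assemble into the required star configurations joining each new white vertex to the appropriate black vertices on $\partial R$.
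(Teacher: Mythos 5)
Your proposal follows essentially the same route as the paper: the ``if'' direction is verified on the three generators of $\sim$ via automorphisms of $S$ permuting $x_1,x_2,x_3$, and the ``only if'' direction reduces via the moves to $\sigma_1=\sigma_2$ with $g$ fixing the branching points, then normalizes $g$ to fix $\alpha$ so that $\gtil$ carries $f_2^{-1}(\alpha)$ to $f_1^{-1}(\alpha)$ matching colours. The additional detail you flag (the star-shaped preimage of a segment from $x_1$ to $x_3$ justifying the third move, and the isotopy-and-lift argument adjusting $g$) merely fleshes out steps the paper leaves implicit, so the approach is the same and correct.
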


\begin{proof}
We begin with the ``if'' part. It is enough to prove that the branched covers $f_1$ and $f_2$
associated to $(\Gamma_1,\sigma_1)$ and $(\Gamma_2,\sigma_2)$ are equivalent for
the three instances of $\sim$ generating it:

\begin{itemize}
\item If $\sigma_1=\sigma_2$ and $\Gamma_1=\gtil\left(\Gamma_2\right)$ and
$f_1$ is associated to $(\Gamma_1,\sigma_1)$, then $f_1\compo\gtil$ is associated to
$(\Gamma_2,\sigma_2)$, so $f_1$ is equivalent to $f_2$;

\item If $\sigma_1=\sigma_2\compo(1\,2)$ and $\Gamma_1$ is $\Gamma_2$ with
colours switched, recall that the construction of
$f_1$ and $f_2$ requires the choice of $x_1,x_2,x_3,\alpha$ in $S$.
Choose an automorphism $g$ of $S$ that fixes $x_3$, switches $x_1$ and $x_2$ and
leaves $\alpha$ invariant. Then we see that $g\compo f_2$ is associated to
$(\Gamma_1,\sigma_1)$, so $f_1$ is equivalent to $f_2$;

\item If $\sigma_1=\sigma_2\compo(2\,3)$ and $\Gamma_1,\Gamma_2$ are related as
described above, choose an automorphism $g$ of $S$ that fixes $x_1$ and switches $x_2$ and $x_3$
(so $g(\alpha)$ is a segment joining $x_1$ to $x_3$). Then we see that $g\compo f_2$ is associated to
$(\Gamma_1,\sigma_1)$, so $f_1$ is equivalent to $f_2$.

\end{itemize}

Turning to the ``only if'' part, suppose that
the branched covers $f_1$ and $f_2$
associated to $(\Gamma_1,\sigma_1)$ and $(\Gamma_2,\sigma_2)$ are equivalent,
namely $f_1\compo\gtil=g\compo f_2$ for some automorphisms $\gtil$ and $g$ of $\Sigmatil$ and $S$.
Using $\sim$ we can reduce to the case where $\sigma_1=\sigma_2=\textrm{id}$.
Assuming $f_1,f_2$ have been constructed using the same $x_1,x_2,x_3,\alpha$,
we may still have that $g$ permutes $x_1,x_2,x_3$ non-trivially if
the triple $\pi_1,\pi_2,\pi_3$ contains repetitions, but up to replacing
$f_2$ by a suitable $h\compo f_2$ for an automorphism $h$ of $S$, we can
suppose $g$ fixes $x_1,x_2,x_3,\alpha$. Then $\Gamma_j=f_j^{-1}(\alpha)$
with black vertices over $x_1$ and white over $x_2$, so $\Gamma_1=\gtil(\Gamma_2)$
matching colours.\end{proof}

When the partitions $\pi_1,\pi_2,\pi_3$ in the branch datum $(\spadesuit)$
are pairwise distinct, to compute the corresponding weak Hurwitz number one can
stick to dessins d'enfant representing the datum through the identity, namely
one can list up to automorphisms of $\Sigmatil$ the bipartite graphs
with black and white vertices of valence $\pi_1$ and $\pi_2$ and
discal regions of length $\pi_3$. When the partitions are not distinct, however,
it is essential to take into account the other moves generating $\sim$, see
Fig.~\ref{nonsymexample:fig}.
\begin{figure}
    \begin{center}
    \includegraphics[scale=.6]{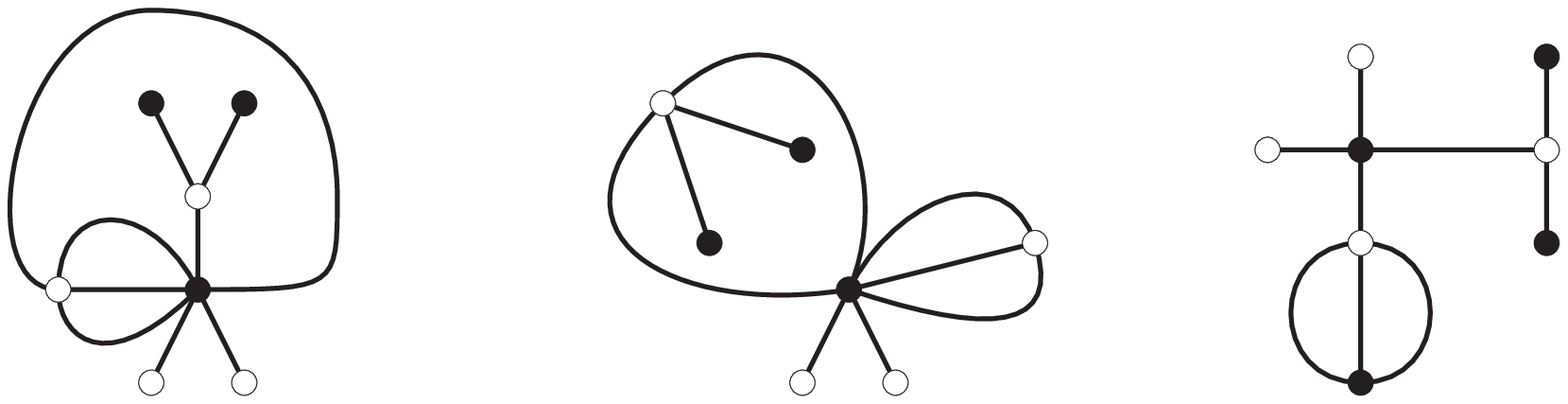}
    \end{center}
\mycap{Call $\Gamma_1,\Gamma_2,\Gamma_3$ the dessins d'enfant appearing
in this picture. Then $\Gamma_1$ and $\Gamma_2$ both represent the datum $(S,S,9,3,(7,1,1),(4,3,1,1),(4,3,1,1))$ through
the identity and through $(2\,3)$. They are not even abstractly homeomorphic as uncoloured graphs, but they define equivalent branched coverings,
because they are obtained from each other by the last move generating $\sim$. Note also that $\Gamma_3$ is obtained
from $\Gamma_1$ by applying the second move generating $\sim$ and then the last move, so $\Gamma_3$ also
represents the same datum through $(1\,3)$ and through $(1\,2\,3)$.
\label{nonsymexample:fig}}
\end{figure}

\paragraph{Relevant data and repeated partitions}
We now specialize again to a branch datum of the form $(\heartsuit)$.
We will compute its weak Hurwitz number $\nu$ by enumerating up to
automorphisms of $\Sigmatil$ the dessins d'enfant $\Gamma$
representing it through the identity, namely
the bipartite graphs $\Gamma$ with black vertices of
valence $(2,\ldots,2)$, white vertices of valence
$(2h+1,1,2,\ldots,2)$, and discal regions of
length $\pi$. Two remarks are in order:
\begin{itemize}
\item In all the pictures we will only draw the two white
vertices of $\Gamma$ of valence $(2h+1,1)$, and
we will decorate an edge of $\Gamma$ by an integer $a\geqslant1$
to understand that the edge contains $a$ black and $a-1$ white valence-2 vertices;
\item Enumerating these dessins d'enfant $\Gamma$ up to automorphisms of $\Sigmatil$
already gives the right value of $\nu$ except if two of the
partitions of $d$ in the datum coincide.
\end{itemize}

\begin{prop}
In a branch datum of the form $(\heartsuit)$ two of the partitions of $d$ coincide precisely in
the following cases:
\begin{itemize}
\item $g=0,\ h\geqslant 0,\ k=h+1$, with partitions $(2,\ldots,2),(2h+1,1),(2,\ldots,2)$;
\item Any $g,\ h\geqslant 2g+1,\ k=2h-2g$, with partitions $(2,\ldots,2),(2h+1,1,2,\ldots,2),(2h+1,1,2,\ldots,2)$.
\end{itemize}
\end{prop}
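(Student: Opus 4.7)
The plan is a direct case analysis on which pair of partitions in $(\heartsuit)$ coincides. With $\pi_1=(2,\ldots,2)$, $\pi_2=(2h+1,1,2,\ldots,2)$, $\pi_3=\pi$, there are three possibilities, and each is controlled by two accounting relations: the total degree of every partition is $2k$, and the number of parts must match. I would set up these two constraints in each case and read off what they force.

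The equality $\pi_1=\pi_2$ is immediate to rule out, since $\pi_2$ always contains the part $1$ (and for $h\geqslant 1$ also the odd part $2h+1\geqslant 3$), whereas $\pi_1$ consists entirely of $2$'s. For $\pi_1=\pi_3$, forcing $\pi=(2,\ldots,2)$ and matching the number of parts gives $\ell=k$, i.e.\ $k=h-2g+1$. I would then combine this with the constraint that $\pi_2$ actually be a partition of $2k$: the trailing block of $2$'s in $\pi_2$ has length $k-h-1$, which must be nonnegative, so $k\geqslant h+1$. Plugging in $k=h-2g+1$ forces $g=0$ and $k=h+1$, which is exactly the first case of the statement (and then $\pi_2=(2h+1,1)$ with no trailing $2$'s).

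For the remaining case $\pi_2=\pi_3$, the shape of $\pi_2$ forces $\pi=(2h+1,1,2,\ldots,2)$. Matching lengths gives $\ell-2=h-2g-1$ trailing $2$'s in $\pi$, which must be nonnegative, so $h\geqslant 2g+1$; matching the total degree, $(2h+1)+1+2(h-2g-1)=2k$, yields $k=2h-2g$, recovering the second case. Since the three pairings above are exhaustive, this proves the ``precisely'' clause. There is no real obstacle; the only point to be careful about is remembering the admissibility constraint $k-h-1\geqslant 0$ coming from the number of trailing $2$'s in $\pi_2$, since without it one would wrongly conclude that $\pi_1=\pi_3$ can occur for all $g\geqslant 0$ with $k=h-2g+1$.
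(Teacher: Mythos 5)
Your proposal is correct and follows essentially the same route as the paper: rule out $\pi_1=\pi_2$ directly, then use the length count $\ell=h+1-2g$ together with the admissibility constraint $k\geqslant h+1$ (equivalently, the nonnegative number $k-h-1$ of trailing $2$'s in $\pi_2$) to pin down the two remaining coincidences. The only cosmetic difference is that the paper phrases everything in terms of matching partition lengths $\ell_1=k$, $\ell_2=k-h+1$, $\ell=h+1-2g$, while you additionally spell out the degree bookkeeping; the conclusions are identical.
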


\begin{proof}
The lengths of the partitions $\pi_1,\pi_2,\pi$ in $(\heartsuit)$ are $\ell_1=k$, $\ell_2=k-h+1$ and
$\ell=h+1-2g$. We can never have $\pi_1=\pi_2$. Since $k\geqslant h+1$ we can have $\ell_1=\ell$ only
if $g=0$ and $k=h+1$, whence the first listed item. We can have $\ell_2=\ell$ only for $k=2h-2g$,
whence $h\geqslant 2g+1$ and the data in the second listed item.
\end{proof}

This result implies that the data $(\heartsuit)$ relevant to Theorems~\ref{genus0:thm} to~\ref{genus2:thm} and
containing repetitions  are precisely
$$(S,S,2,3,(2),(1,1),(2))\qquad (S,S,4,3,(2,2),(3,1),(2,2))$$
$$(S,S,6,3,(2,2,2),(5,1),(2,2,2))\qquad  (S,S,4,3,(2,2),(3,1),(3,1))$$
$$(S,S,8,3,(2,2,2,2),(5,1,2),(5,1,2))\qquad (T,S,8,3,(2,2,2,2),(7,1),(7,1))$$
(where $T$ is the torus) for which we easily have $\nu=1$ in the first case, and
$\nu=0$ in the second and third one by the \emph{very even data} criterion of~\cite{PaPebis}.
The last three cases will be taken into account in Sections~\ref{genus0:sec} and~\ref{genus1:sec}.

\section{Genus 0}\label{genus0:sec}
In this section we prove Theorem~\ref{genus0:thm}, starting
from the very easy cases $h=0$ and $h=1$, for which there is only
one homeomorphism type of relevant graph and only one embedding
in $S$, as shown in Fig.~\ref{genus0easy:fig} --- here and below
the notion of $(2h+1,1)$ \emph{graph} abbreviates \emph{a graph with vertices
of valence} $(2h+1,1)$.
\begin{figure}
    \begin{center}
    \includegraphics[scale=.6]{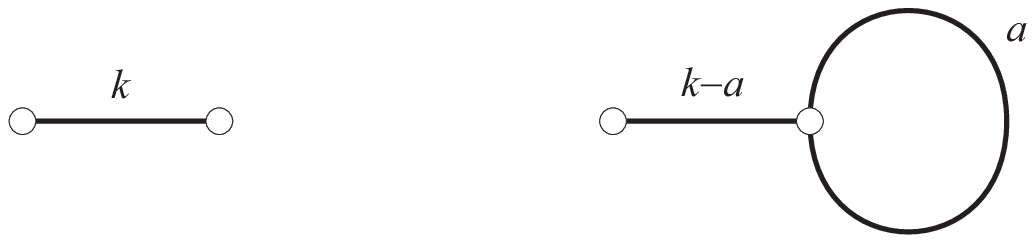}
    \end{center}
\mycap{The $(1,1)$ and $(3,1)$ graphs in $S$. \label{genus0easy:fig}}
\end{figure}
The first graph gives a unique realization of $(2k)$ as $\pi$.
The second graph with $a=p$ gives a unique realization of $\pi=(p,2k-p)$ for $p<k$,
while $(k,k)$ is exceptional. Note that a single graph emerges for the
realization of the case with repeated partitions
$(S,S,4,3,(2,2),(3,1),(3,1))$, so its realization is \emph{a fortiori} unique
up to equivalence (and it is also immediate to check that the move of Fig.~\ref{DAmove:fig}
leads this graph to itself).

For the case $h=2$, whence $\ell=3$,
we first check that the cases listed in the statement cover all the possibilities for $\pi=(p,q,r)$:
\begin{itemize}
\item If $p=q=r$ we have $3p=2k$, so $k=3m$ and $p=2m$, whence case (i);
\item If $p\neq q=r$ we have $p=2k-2q$, so $p=2t$ and $q=k-t$, with $k-t\neq 2t$, so $t\neq \frac k3$,
and $0<t<k$; then either $t=\frac k2$, so $k=2m$ and case (ii), or one of the
subcases (a), (b) or (c) of (iii);
\item If $p>q>r$ we have $p=2k-q-r$; since $q>r$ we have $2k-2r>2k-q-r>q>r$,
whence $1\leqslant r<\frac23k$ and $r<q<k-\frac r2$; for $r<\frac k2$ by comparing
$q$ with $k-r$ we get case (iv) or (v) or (vi)-(a), while for $r\geqslant \frac k2$
we get (vi)-(b).
\end{itemize}

Even if this is not strictly necessary, we also provide
in Tables~\ref{Tk6:tab} and~\ref{Tk7:tab} two examples
of an application of Theorem~\ref{genus0:thm} for $h=2$, proving
that each  listed case is non-empty for suitable $k$.
The tables include a description of the realizations with
notation that will be introduced in the proof.
\begin{table}
\begin{center}
\begin{tabular}{c||l|c|l}
$\pi$       &   Case
                &   $\nu$   &   Realizations \\ \hline\hline
(10,1,1)    & (iii)-(c) \tiny{$\frac k2=3<t=5<k=6$}
                & 1 & \small{I(4,1,1)}  \\ \hline
(9,2,1)     & (v) \tiny{$\begin{array}{l}1\leqslant r=1<\frac k2 =3\\ r=1<q=2<k-r=5\end{array}$}
                & 2 & \small{I(3,2,1), I\!I(4,1,1)}  \\ \hline
(8,3,1)     & (v) \tiny{$\begin{array}{l}1\leqslant r=1<\frac k2 =3\\ r=1<q=3<k-r=5\end{array}$}
                & 2 & \small{I(2,3,1), I\!I(3,2,1)}  \\ \hline
(8,2,2)     & (iii)-(c) \tiny{$\frac k2=3<t=4<k=6$}
                & 1 & \small{I(2,2,2)}  \\ \hline
(7,4,1)     & (v) \tiny{$\begin{array}{l}1\leqslant r=1<\frac k2 =3\\ r=1<q=4<k-r=5\end{array}$}
                & 2 & \small{I(1,4,1), I\!I(2,3,1)}  \\ \hline
(7,3,2)     & (v) \tiny{$\begin{array}{l}1\leqslant r=2<\frac k2 =3\\ r=2<q=3<k-r=4\end{array}$}
                & 2 & \small{I(1,3,2),\ I\!I(3,1,2)}  \\ \hline
(6,5,1)     & (iv) \tiny{$1\leqslant r=1<\frac k2 =3$}
                & 1 & \small{I\!I(1,4,1)}  \\ \hline
(6,4,2)     & (iv) \tiny{$1\leqslant r=2<\frac k2 =3$}
                & 1 & \small{I\!I(2,2,2)}  \\ \hline
(6,3,3)     & (ii) \tiny{$m=3$}
                & 0 &   \\ \hline
(5,5,2)     & (iii)-(a) \tiny{$1\leqslant t=1<\frac k3=2$}
                & 1 & \small{I\!I(1,3,2)}  \\ \hline
(5,4,3)     & (vi)-(b) \tiny{$\begin{array}{l}\frac k2=3\leqslant r=3<\frac23k=4\\ r=3<q=4<k-\frac r2=4.5\end{array}$}
                & 3 & \small{I\!I(1,1,4),\ I\!I(2,1,3),\ I\!I(1,2,3)}  \\ \hline
(4,4,4)     & (i) \tiny{$m=2$}
                & 0 &
\end{tabular}
\end{center}
\mycap{The case $k=6$. \label{Tk6:tab}}
\end{table}

\begin{table}
\begin{center}
\begin{tabular}{c||l|c|l}
$\pi$       &   Case
                &   $\nu$   &   Realizations \\ \hline\hline
(12,1,1)    & (iii)-(c)   \tiny{$\frac k2=3.5<t=6<k=7$}
                & 1  & \small{I(5,1,1)}  \\ \hline
(11,2,1)    & (v)  \tiny{$\begin{array}{l} 1\leqslant r=1<\frac k2=3.5\\ r=1<q=2<k-r=6 \end{array}$}
                & 2  & \small{I(4,2,1),\ I\!I(5,1,1)}  \\ \hline
(10,3,1)    & (v)  \tiny{$\begin{array}{l} 1\leqslant r=1<\frac k2=3.5\\ r=1<q=3<k-r=6 \end{array}$}
                & 1  & \small{I(3,3,1),\ I\!I(4,2,1)}  \\ \hline
(10,2,2)    & (iii)-(c) \tiny{$\frac k2=3.5<t=5<k=7$}
                & 1  &  \small{I(3,2,2)} \\ \hline
(9,4,1)     & (v)  \tiny{$\begin{array}{l} 1\leqslant r=1<\frac k2=3.5\\ r=1<q=4<k-r=6 \end{array}$}
                & 2  &  \small{I(2,4,1),\ I\!I(3,3,1)} \\ \hline
(9,3,2)     & (v)  \tiny{$\begin{array}{l} 1\leqslant r=2<\frac k2=3.5\\ r=2<q=3<k-r=5 \end{array}$}
                & 2  &  \small{I(2,3,2),\ I\!I(4,1,2)} \\ \hline
(8,5,1)     & (v)  \tiny{$\begin{array}{l} 1\leqslant r=1<\frac k2=3.5\\ r=1<q=5<k-r=6 \end{array}$}
                & 2  & \small{I(1,5,1),\ I\!I(2,4,1)}  \\ \hline
(8,4,2)     & (v) \tiny{$\begin{array}{l} 1\leqslant r=2<\frac k2=3.5\\ r=2<q=4<k-r=5 \end{array}$}
                & 2  &  \small{I(1,4,2),\ I\!I(3,2,2)} \\ \hline
(8,3,3)     & (iii)-(c) \tiny{$\frac k2=3.5<t=4<k=7$}
                & 1  &  \small{I(1,3,3)} \\ \hline
(7,6,1)     & (iv)  \tiny{$1\leqslant r=1<\frac k2=3.5$}
                & 1  &  \small{I\!I(1,5,1)} \\ \hline
(7,5,2)     & (iv)  \tiny{$1\leqslant r=2<\frac k2=3.5$}
                & 1  &  \small{I\!I(2,3,2)} \\ \hline
(7,4,3)     & (iv)  \tiny{$1\leqslant r=3<\frac k2=3.5$}
                & 1  &  \small{I\!I(3,1,3)} \\ \hline
(6,6,2)     & (iii)-(a) \tiny{$1\leqslant t=1<\frac k3=2.\overline{3}$}
                & 1  &  \small{I\!I(1,4,2)} \\ \hline
(6,5,3)     & (vi)-(a)  \tiny{$\begin{array}{l} 1\leqslant r=3<\frac k2=3.5\\ k-r=4<q=5<k-\frac r2=5.5     \end{array}$}
                & 3  &  \small{I\!I(1,1,5),\ I\!I(1,3,3),\ I\!I(2,2,3)} \\ \hline
(6,4,4)     & (iii)-(b) \tiny{$\frac k3=2.\overline{3}<t=3<\frac k2=3.5$}
                & 1  &  \small{I\!I(1,2,4)}\\ \hline
(5,5,4)     & (iii)-(a) \tiny{$1\leqslant t=2<\frac k3=2.\overline{3}$}
                & 1  &  \small{I\!I(2,1,4)}
\end{tabular}
\end{center}
\mycap{The case $k=7$. \label{Tk7:tab}}
\end{table}

\medskip

Let us now get to the actual proof.  There are only two inequivalent
embeddings in $S$ of the $(5,1)$ graph, shown in Fig.~\ref{51genus0:fig}
\begin{figure}
    \begin{center}
    \includegraphics[scale=.6]{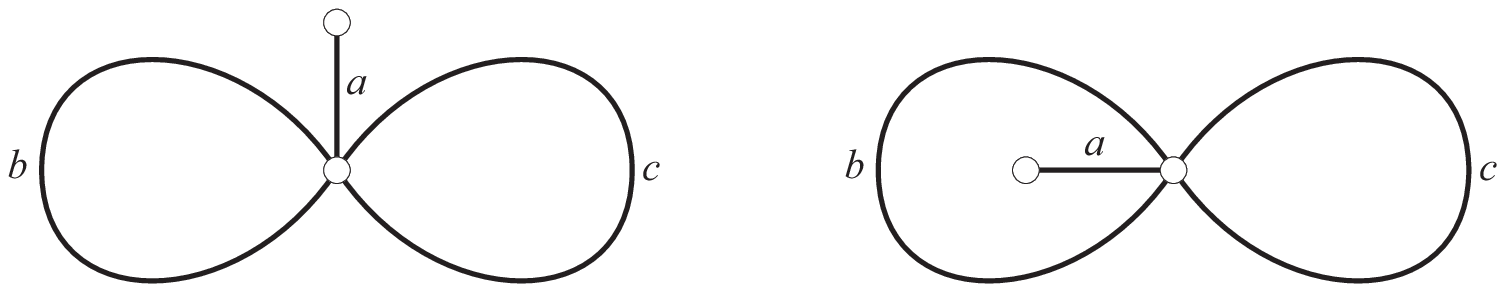}
    \end{center}
\mycap{The $(5,1)$ graphs in $S$: $\primo(a,b,c)$ on the left and $\secon(a,b,c)$ on the right. \label{51genus0:fig}}
\end{figure}
with edges decorated as explained at the end of Section~\ref{DA:sec}, and
denoted by $\primo(a,b,c)$ and $\secon(a,b,c)$, realizing respectively
$\pi=(2a+b+c,b,c)$ and $\pi=(2a+b,b+c,c)$.
Note that there
there is no automorphism taking $\secon$ to itself, while there is one
taking $\primo(a,b,c)$ to $\primo(a,c,b)$.

Given $k$ and $\pi=(p,q,r)$ with $p+q+r=2k$
we must now count how many realizations we have of $\pi$ as $(2a+b+c,b,c)$ up
to switching $b$ and $c$, or as $(2a+b,b+c,c)$. Of course if $p=q=r$ there is no
realization, whence case (i) of the statement.

If $p=q\neq r$, so $\pi=(2t,k-t,k-t)$ with $t\neq \frac k3$, we can realize it via $\primo(a,b,c)$ precisely if
$$\sistema{2a+b+c=2t\\ b=c=k-t}\quad\Leftrightarrow\quad\sistema{a=2t-k\\ b=c=k-t}\quad \textrm{for}\ \frac k2<t<k$$
whereas we can realize it via $\secon(a,b,c)$ in the following cases:
$$\sistema{2a+b=b+c=k-t\\ c=2t}\quad\Leftrightarrow\quad\sistema{a=t\\ b=k-3t\\ c=2t}\quad \textrm{for}\ 1\leqslant t<\frac k3$$
$$\sistema{2a+b=c=k-t\\ b+c=2t}\quad\Leftrightarrow\quad\sistema{a=k-2t\\ b=3t-k\\ c=k-t}\quad \textrm{for}\ \frac k3<t<\frac k2.$$
We have found the three disjoint instances (a), (b), (c) of case (iii), and we have not found
realizations with $t=\frac k2$, whence (ii).

\medskip

Turning to the case $p>q>r$, so $p=2k-q-r$ with $1\leqslant r<\frac23k$ and $r<q<k-\frac r2$,
the realizations via $\primo(a,b,c)$ are given by
$$\sistema{2a+b+c=2k-q-r\\ b=q\\ c=r}\quad\Leftrightarrow\quad\sistema{a=k-q-r\\ b=q\\ c=r}\quad
\begin{array}{l}
\textrm{for}\ r<q<k-r\\
\textrm{whence}\ 1\leqslant r<\frac k2
\end{array}$$
which gives the first contribution to (v). The realizations via $\secon(a,b,c)$ are instead
$$\sistema{2a+b=2k-q-r\\ b+c=q\\ c=r}\quad\Leftrightarrow\quad\sistema{a=k-q\\ b=q-r\\ c=r}$$
which gives the only contribution to (iv), the second and last contribution to (v), and
the first contribution to (vi)-(a) and (vi)-(b), or
$$\sistema{b+c=2k-q-r\\ 2a+b=q\\ c=r}\quad\Leftrightarrow\quad\sistema{a=q+r-k\\ b=2k-q-2r\\ c=r}
\quad \textrm{for}\ q>k-r$$
(note that $q<2k-2r$ is implied by $q<k-\frac r2$ and $r<\frac23k$), or
$$\sistema{b+c=2k-q-r\\ c=q\\ 2a+b=r}\quad\Leftrightarrow\quad\sistema{a=q+r-k\\ b=2k-2q-r\\ c=q}
\quad \textrm{for}\ q>k-r$$
($2k-2q-r>0$ is equivalent to $q<k-\frac r2$); these realizations both give the final
contributions to (vi)-(a) and (vi)-(b), which are respectively
obtained by comparing $r$ to $k-r$, since we must have $q>\max\{r,k-r\}$.

The last issue is to deal with the datum with repeated
partitions $$(S,S,8,3,(2,2,2,2),(5,1,2),(5,1,2)).$$ This can be realized as $\primo(1,2,1)$ and as $\secon(2,1,1)$, and we must
show that these graphs are not obtained from each other via the move of Fig.~\ref{DAmove:fig}.
This is done in Fig.~\ref{51g0selfdual:fig},
\begin{figure}
    \begin{center}
    \includegraphics[scale=.6]{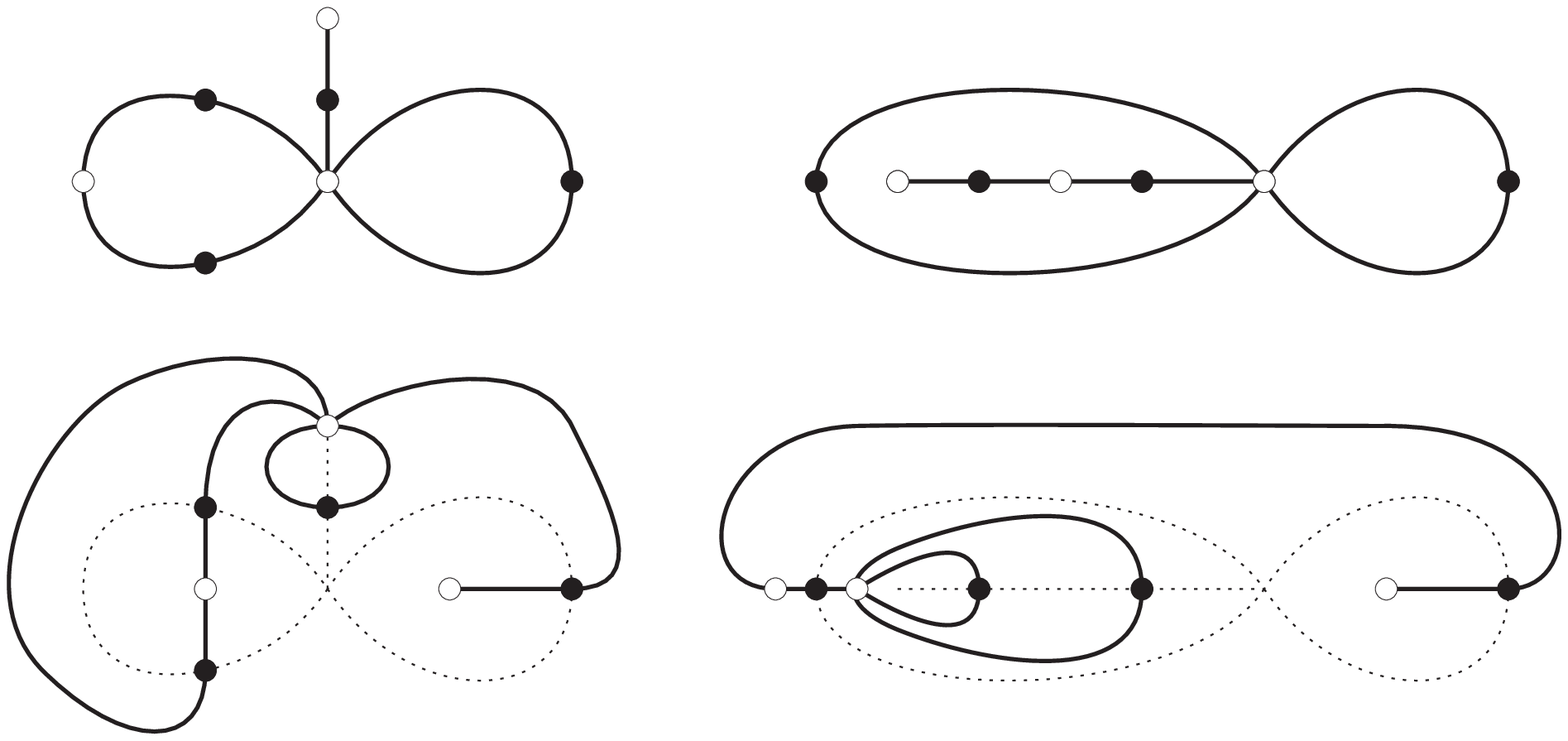}
    \end{center}
\mycap{Graphs mapped to themselves by the move of Fig.~\ref{DAmove:fig}. \label{51g0selfdual:fig}}
\end{figure}
which proves that the move actually takes each of these graphs to itself.

The proof is complete.

\section{Genus 1}\label{genus1:sec}

Let us prove Theorem~\ref{genus1:thm}.
We begin with the rather easy case $h=2$. Up to automorphisms
there is only one embedding in $T$ of a $(5,1)$ graph
with a disc as only region, shown in Fig.~\ref{51genus1:fig}
\begin{figure}
    \begin{center}
    \includegraphics[scale=.6]{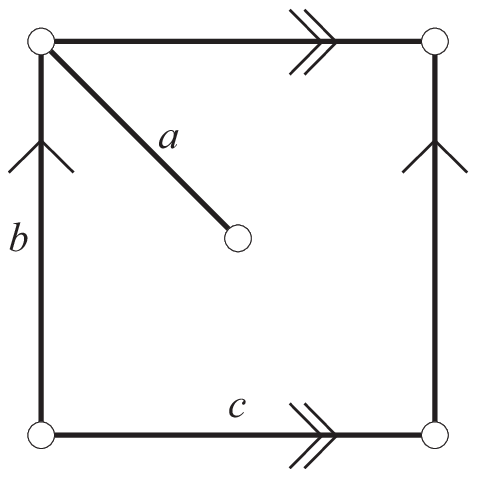}
    \end{center}
\mycap{The $(5,1)$ graph in $T$. \label{51genus1:fig}}
\end{figure}
and subject to the symmetry $b\leftrightarrow c$. So
$$\nu=\#\left\{(a,\{b,c\}):\ a+b+c=k\right\}=\sum_{a=1}^{k-2}\left[\frac{k-a}2\right]=\sum_{j=2}^{k-1}\left[\frac j2\right].$$
For odd and even $k$ one easily gets the expressions $\frac14(k-1)^2$ and $\frac14k(k-2)$ that one can unify
as $\left[\frac14(k-1)^2\right]$.

\bigskip

Turning to $h=3$, so $\ell=2$ and $\pi=(p,2k-p)$ with $1\leqslant p\leqslant k$, we first determine the embeddings
in $T$ of the bouquet $B$ of $3$ circles with two discs as regions. Of course at least a circle of $B$
is non-trivial on $T$, so its complement is an annulus. Then another circle must joint the boundary components
of this annulus, so we can assume two circles of $B$ form a standard meridian-longitude pair on $T$.
Then the possibilities for $B$ are as in Fig.~\ref{bouquetinT:fig}.
\begin{figure}
    \begin{center}
    \includegraphics[scale=.6]{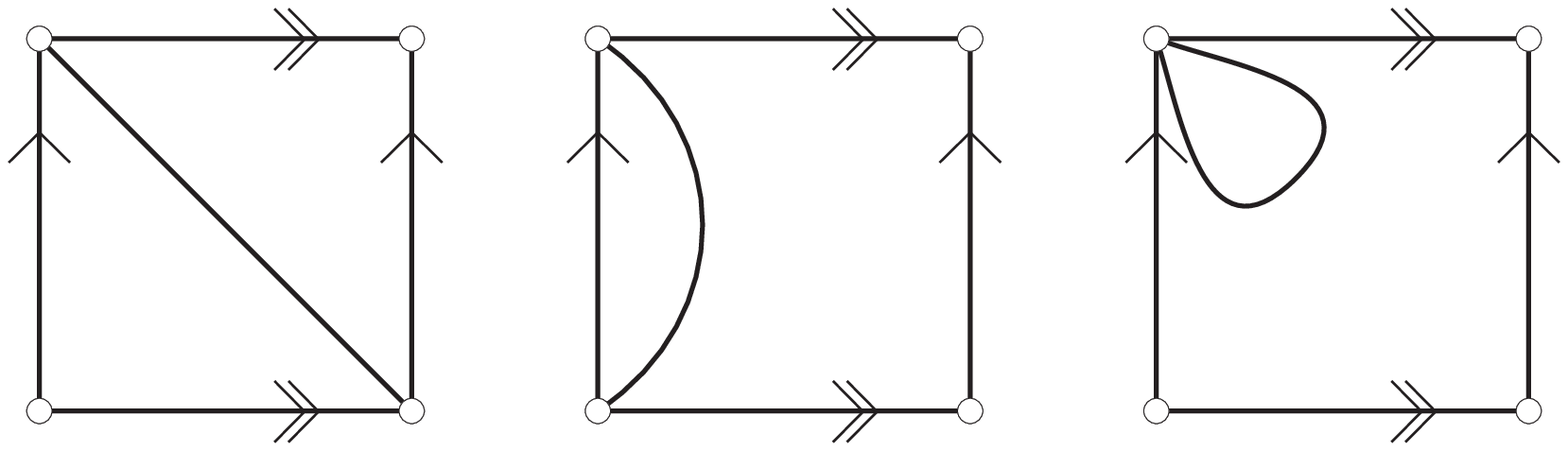}
    \end{center}
\mycap{A bouquet of 3 circles in $T$ with 2 discs as regions. \label{bouquetinT:fig}}
\end{figure}
Note that these embeddings have respectively a $\permu_3\times\matZ/_2$, a $\matZ/_2\times\matZ/_2$ and a
$\matZ/_2$ symmetry. It easily follows that the relevant embeddings in $T$ of the $(7,1)$ graph are
up to symmetry those shown in Fig.~\ref{71embT:fig}.
\begin{figure}
    \begin{center}
    \includegraphics[scale=.6]{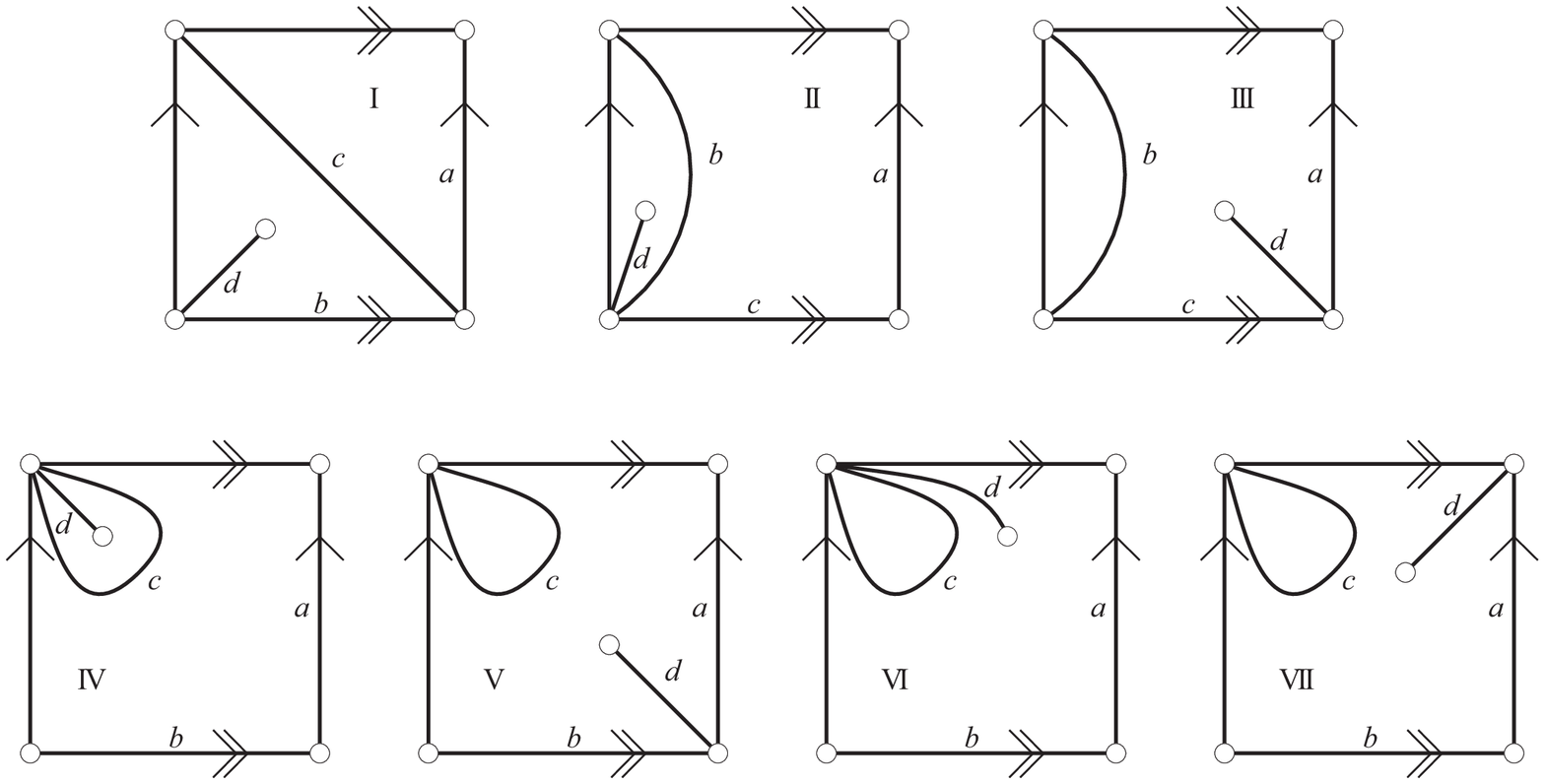}
    \end{center}
\mycap{Embeddings in $T$ of the $(7,1)$ graph with 2 discs as regions. \label{71embT:fig}}
\end{figure}
Note that we have a symmetry switching $a$ and $b$ in cases $\primo,\ \secon,\ \quart,\ \quint$, and no other one.
Moreover $\primo(a,b,c,d)$ realizes $(a+b+c+2d,a+b+c)$,
while $\secon(a,b,c,d)$ realizes $(a+b+2c,a+b+2d)$, then
$\terzo(a,b,c,d)$ realizes $(a+b+2c+2d,a+b)$, next
$\quart(a,b,c,d)$ realizes $(2a+2b+c,c+2d)$ and finally
$\quint(a,b,c,d)$, $\sesto(a,b,c,d)$ and $\setti(a,b,c,d)$ all realize
$(2a+2b+c+2d,c)$. We now count how many different realizations of $\pi=(p,2k-p)$ with
$p\leqslant k$ exist.

\medskip

\noindent $\primo$  \quad $\pi$ is realized if $3\leqslant p<k$ in
$$\#\{(\{a,b\},c):\ a+b+c=p\}=\left[\frac14(p-1)^2\right]$$
ways. Note that the expression gives the right value $0$ also for $p=1,2$.

\medskip

\noindent $\secon$ \quad $\pi$ is realized if $4\leqslant p\leqslant k$. For $p<k$
the number of realizations is
\begin{eqnarray*}
    & & \#\{(\{a,b\},c):\ a+b+2c=p\}+\#\{(\{a,b\},d):\ a+b+2d=p\} \\
    &=& 2\#\{(\{a,b\},c):\ a+b+2c=p\}=2\sum_{c=1}^{[p/2]-1}\left[\frac{p-2c}2\right] \\
    &=& 2\sum_{c=1}^{[p/2]-1}\left(\left[\frac{p}{2}\right]-c\right)
    =2\left(\left[\frac{p}{2}\right]-1\right)\left[\frac{p}{2}\right]-\left(\left[\frac{p}{2}\right]-1\right)\left[\frac{p}{2}\right] \\
    &=& \left(\left[\frac{p}{2}\right]-1\right)\left[\frac{p}{2}\right]
\end{eqnarray*}
(which is correct also for $p=1,2,3$), while for $p=k$ it is $\frac12\left[\frac k2\right]\left(\left[\frac k2\right]-1\right)$.

\medskip

\noindent $\terzo$ \quad $\pi$ is realized if $1<p<k-1$ and the number of realizations is
$$\#\{(a,b):\ a+b=p\}\cdot\#\{(c,d):\ c+d=k-p\}=(p-1)(k-p-1)$$
which works also for $p=1$ and $p=k-1$.

\medskip

\noindent $\quart$ \quad We first consider the case $p<k$. Then
$\pi$ is realized in the following ways:
$$\sistema{c+2d=p\\ 2a+2b+c=2k-p}\quad\Leftrightarrow\quad
\sistema{1\leqslant d\leqslant\left[\frac{p-1}2\right]\\
c=p-2d\\ a+b=k-p-d}\quad
\textrm{for}\ 3\leqslant p<k,$$
$$\sistema{c+2d=2k-p\\ 2a+2b+c=p}\quad\Leftrightarrow\quad
\sistema{k-p+2\leqslant d\leqslant k-1-\left[\frac{p}2\right]\\
c=2k-p-2d\qquad\qquad\qquad\textrm{for}\ 5\leqslant p<k.\\
a+b=p-k+d}$$ So we get for $5\leqslant p<k$ the number
$$\sum_{d=1}^{\left[\frac{p-1}2\right]}
\left[\frac{k-p+d}2\right]+
\sum_{d=k-p+2}^{k-1-\left[\frac p2\right]}
\left[\frac{p-k+d}2\right]$$
but the expression is correct for $1\leqslant p<k$
because both sums vanish for $p=1,2$ and the latter does for $p=3,4$.
An easy computation based on the fact already shown that $\sum_{j=1}^x\left[j/2\right]=\left[\frac14 x^2\right]$
gives the value
$$\left[\frac14\left(k-1-\left[\frac{p}2\right]\right)^2\right] -\left[\frac14(k-p)^2\right]+
\left[\frac14\left[\frac {p-1}2\right]^2\right].$$
For $p=k$ the realizations correspond to
$$\sistema{2a+2b+c=k\\ c+2d=k}\quad\Leftrightarrow\quad
\sistema{1\leqslant d\leqslant\left[\frac{k-1}2\right]\\
c=k-2d\\
a+b=d}$$
so their number is
$$\sum_{d=1}^{\left[\frac{k-1}2\right]}\left[\frac d2\right]=\left[\frac14\left[\frac{k-1}2\right]^2\right].$$

\medskip

\noindent $\quint$ \quad $\pi$ can be realized if $1\leqslant p\leqslant k-3$, via
$$\sistema{2a+2b+c+2d=2k-p\\ c=p}\quad\Leftrightarrow\quad
\sistema{1\leqslant d\leqslant k-p-2\\
a+b=k-p-d\\ c=p}$$
so there are $\left[\frac14(k-p-1)^2\right]$ ways (remember the $a\leftrightarrow b$ symmetry)
and the expression is correct also for $p=k-2$ and $p=k-1$.

\medskip

\noindent $\sesto$,\ $\setti$ \quad The realizations are exactly the same, but we have no symmetry, so there are $\binom{k-p-1}2$
of them, which again is correct for all $p<k$.

\bigskip

The last task before concluding is to take into account the symmetry of the datum
$(T,S,8,3,(2,2,2,2),(7,1),(7,1))$. The above discussion readily implies that its
realizations come from \quint, \sesto, \setti\ with $a=b=c=d=1$, hence from the
dessins d'enfant in the top part of Fig.~\ref{selfdual:fig}.
\begin{figure}
    \begin{center}
    \includegraphics[scale=.6]{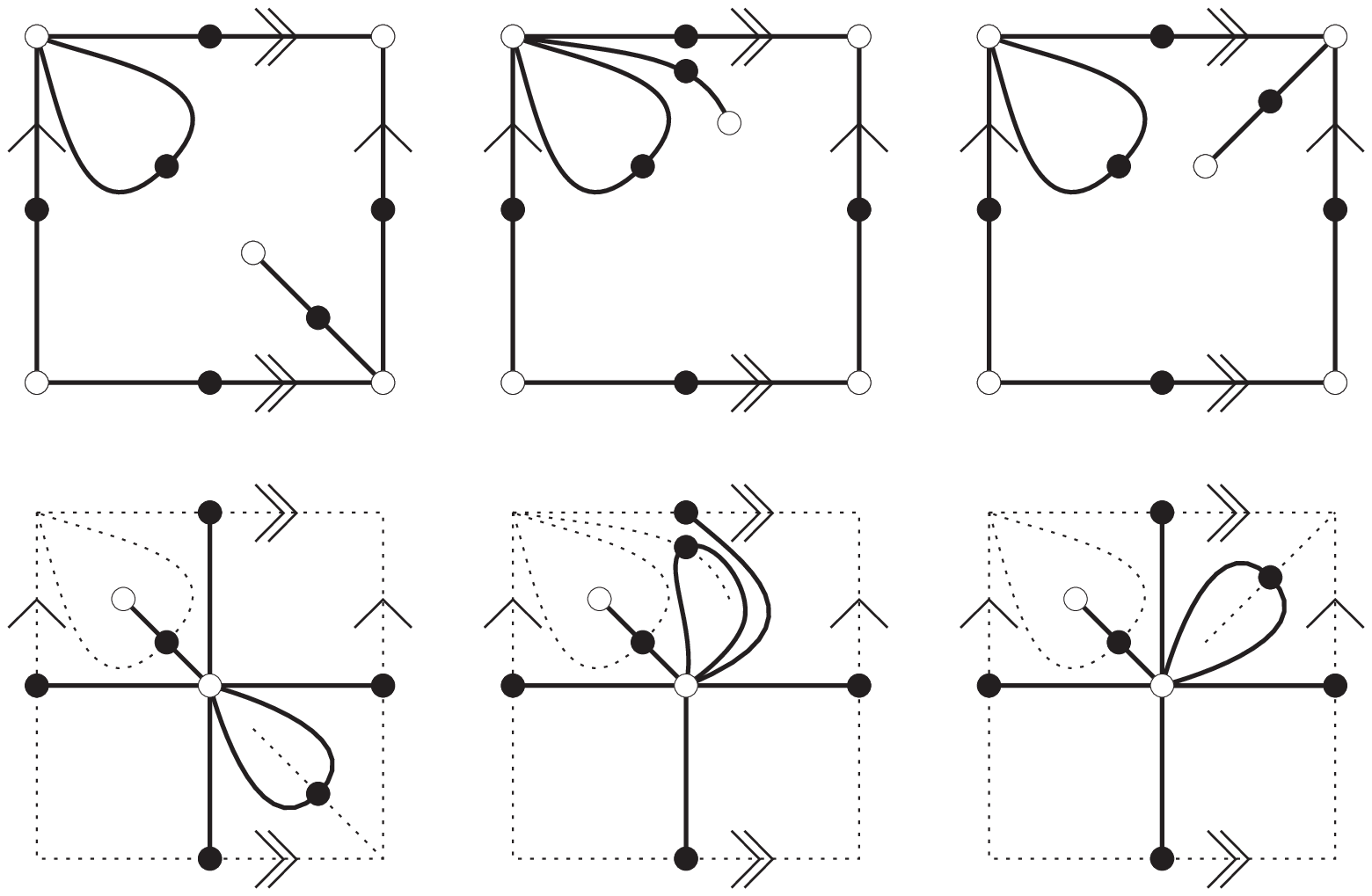}
    \end{center}
\mycap{Moves on dessins d'enfant taking them to themselves. \label{selfdual:fig}}
\end{figure}
In the bottom part of the same figure we show the dessins obtained
by the move that corresponds to the switch of the partitions $(7,1)$.
Since in each case we obtain the same dessin, the move has no effect on the counting.

To conclude we are left to sum all the contributions. For $p=k$ we have the two summands
of the statement.  For $p<k$ we have some simplifications: the contributions from \terzo,\ \sesto,\ \setti\ give
$$(p-1)(k-p-1)+2\cdot\frac12(k-p-1)(k-p-2)=(k-3)(k-p-1)$$
while the second summand from \quart\ and \quint\ gives
$$-\left[\frac14(k-p)^2\right]+\left[\frac14(k-p-1)^2\right]=-\left[\frac{k-p}2\right]$$
and the proof is complete.

\section{Genus 2}\label{genus2:sec}

We now prove Theorem~\ref{genus2:thm}. The topological argument will be
more elaborate than that for Theorem~\ref{genus1:thm}, while the arithmetic part
will be much easier. We denote by $2T$ the genus-$2$ surface and we begin with the following:

\medskip

\noindent
\textsc{Claim}. \emph{Up to automorphisms, there exist precisely $4$ embeddings in $2T$ of the bouquet of $4$ circles
having a disc as only region. They are given by the $1$-skeleton of the realizations of $2T$ as an
octagon $O$ with paired edges shown in Fig.~\ref{4octagons:fig}.}
\begin{figure}
    \begin{center}
    \includegraphics[scale=.6]{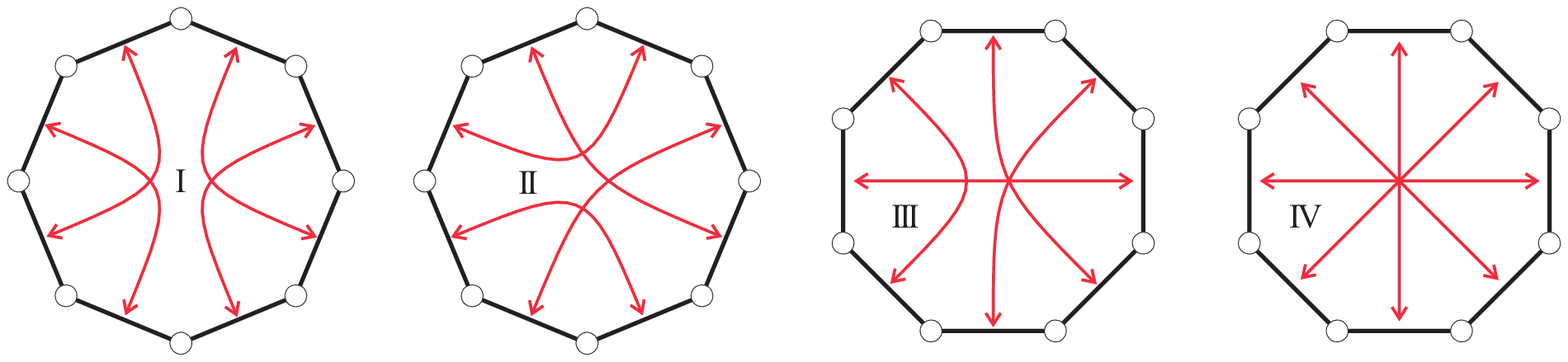}
    \end{center}
\mycap{Inequivalent realizations of $2T$ as an octagon $O$ with paired edges. \label{4octagons:fig}}
\end{figure}

\medskip

\noindent To prove the claim, we note that
an embedding as described always gives a realization of $2T$ as $O$ with paired edges,
so we must discuss how many of these exist. Note that an edge-pairing gives $2T$ precisely if
all the vertices of $O$ are equivalent under it.
We cyclically give colours in $\matZ/_8$ to the edges of $O$, so a pairing consists of 4 pairs $\{\{i_0,i_1\},\{i_2,i_3\},\{i_4,i_5\},\{i_6,i_7\}\}$
with $\{i_0,\ldots,i_7\}=\matZ/_8$. Up to symmetry we suppose
that $i_0=0$ and $i_1$ is the minimal distance between two
paired edges. Up to symmetry we have $i_1\leqslant 4$
and we also have $i_1\geqslant 2$ because if $i_1=1$ the vertex between $0$ and $1$
is equivalent to itself only. So we can suppose $i_2=1$.
The rest of the proof of the claim is pictorially illustrated in Fig.~\ref{octaproof:fig}
\begin{figure}
    \begin{center}
    \includegraphics[scale=.6]{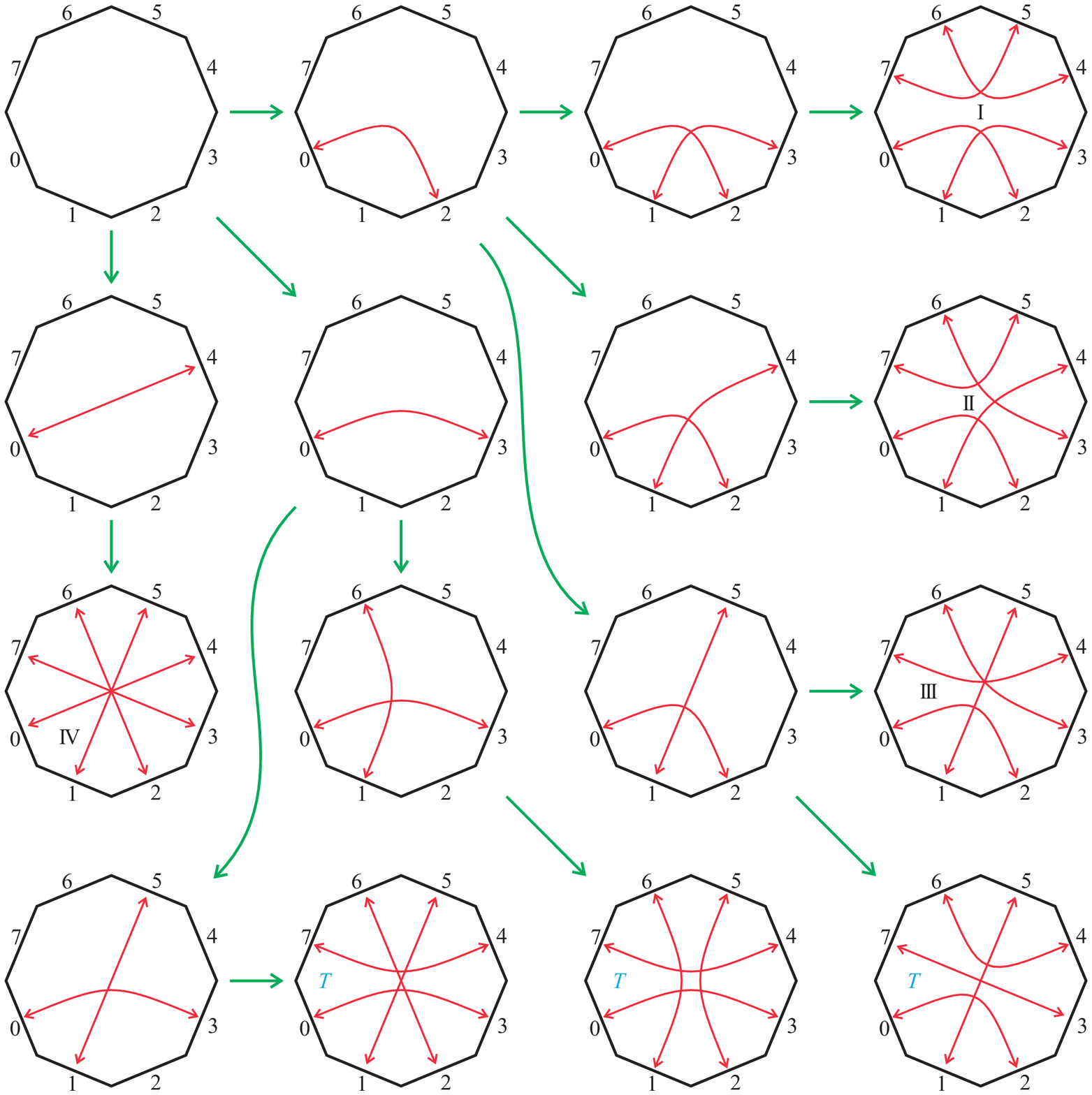}
    \end{center}
\mycap{Analysis of the edge-pairings of an octagon. \label{octaproof:fig}}
\end{figure}

Suppose that $i_1=2$. Up to symmetry we then have $3\leqslant i_3\leqslant 5$.
If $i_3=3$ we readily conclude that the other glued pairs are $\{4,6\},\{5,7\}$, so
we are in case I of Fig.~\ref{4octagons:fig}. If $i_3=4$ the other glued pairs
must be $\{3,6\},\{5,7\}$ and we are in case \secon. If $i_3=5$ the other glued pairs
can be $\{3,6\},\{4,7\}$, whence \terzo, or $\{3,7\},\{4,6\}$, but then there are 3 equivalence
classes of vertices, so this case must be dismissed.

Now suppose $i_1=3$. If $i_3=4$ then two edges in $\{5,6,7\}$ are paired, absurd.
If $i_3=5$ the other pairs can only be $\{2,6\},\{4,7\}$ but then there are 3 equivalence
classes of vertices, and the same happens with $i_3=6$ and other forced pairs $\{2,5\},\{4,7\}$.

For $i_1=4$ we of course get only \quart\ and the claim is proved.

\medskip

Note that \secon\ and \terzo\ have an order-2 symmetry, while
\primo\ has an order-4 dihedral symmetry and \quart\ has an order-16 dihedral symmetry.
This remark easily implies that there are $13$ inequivalent embeddings of the $(9,1)$ graph
in $2T$ with a disc as only region, obtained from \primo, \secon, \terzo, \quart\
by adding a small leg (with a label $e$ not shown)
in one of the positions described in Fig.~\ref{91emb:fig}.
\begin{figure}
    \begin{center}
    \includegraphics[scale=.6]{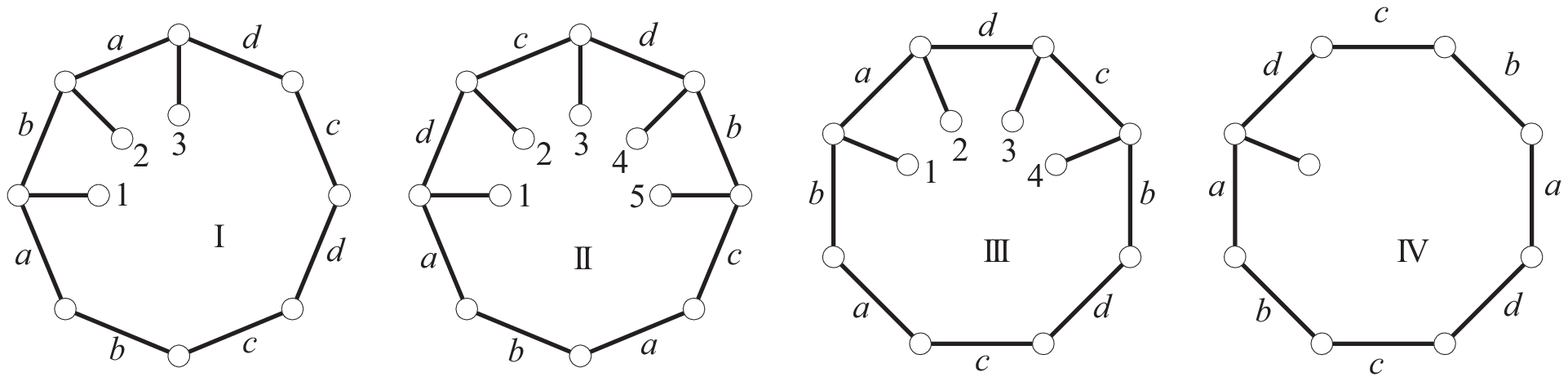}
    \end{center}
\mycap{Inequivalent embeddings in $2T$ of the $(9,1)$ graph with a disc as a only region. \label{91emb:fig}}
\end{figure}
Moreover these embeddings are subject to the following symmetries (and only them):
$$\primo.1(a,b,c,d,e)\leftrightarrow\primo.1(b,a,d,c,e)\qquad
\primo.3(a,b,c,d,e)\leftrightarrow\primo.3(d,c,b,a,e)$$
$$\secon.1(a,b,c,d,e)\leftrightarrow\secon.1(d,c,b,a,e)\qquad
\secon.5(a,b,c,d,e)\leftrightarrow\secon.5(d,c,b,a,e)$$
$$\quart(a,b,c,d,e)\leftrightarrow\quart(d,c,b,a,e).$$
Each of the $8$ cases without symmetries contributes to $\nu$ with the number of ordered
$5$-tuples with sum $k$, whence an $8\binom{k-1}4$ summand. To compute the contribution
of the cases with symmetries, which is the same for all of them,
we use the notation of $\primo.1$, so we must count the $5$-tuples $(a,b,c,d,e)$
with sum $k$ up to the symmetry $(a,b,c,d,e)\leftrightarrow(b,a,d,c,e)$.
If $b\neq a$ we can take $a>b$, while for $b=a$ we can take $c\leqslant d$, whence
$$\sum_{e=1}^{k-4}\left(
\sum_{a=2}^{k-e-3}\sum_{b=1}^{\min\{a-1,k-e-a-2\}}(k-e-a-b-1)+\sum_{a=1}^{[(k-e-2)/2]}\left[\frac{k-e-2a}2\right]\right).$$
We concentrate on the first sum and distinguish according to the parity of $k$.
For $k=2t$ we split the sum on $e$ between the even and the odd values of $e$,
so for $e=2j$ and $e=2j-1$. Imposing $2j\leqslant 2t-4$ and $2j-1\leqslant 2t-4$ we get
$j\leqslant t-2$ in both cases, while the inequality
$a-1\leqslant k-e-a-2$ is equivalent respectively to
$a\leqslant t-j-1$ and $a\leqslant t-j$. So we get
\begin{eqnarray*}
  \sum_{j=1}^{t-2} &  & \Bigg( \sum_{a=2}^{t-j-1}\sum_{b=1}^{a-1}(2t-2j-a-b-1)  \\
    & & +\sum_{a=t-j}^{2t-2j-3}\sum_{b=1}^{2t-2j-a-2}(2t-2j-a-b-1)\\
    & & +\sum_{a=2}^{t-j}\sum_{b=1}^{a-1}(2t-2j-a-b) \\
    & & +\sum_{a=t-j+1}^{2t-2j-2}\sum_{b=1}^{2t-2j-a-1}(2t-2j-a-b)\Bigg).
\end{eqnarray*}
One can now show that this repeated sum actually equals the expression
$\frac16(t-1)(t-2)(2t^2-6t+3)$, which can be done in two ways:
\begin{itemize}
\item Since $\sum_{s=1}^ms^p$ is a polynomial of degree $p+1$ in $m$,
the given repeated sum is a polynomial of degree $4$ in $t$; moreover it
vanishes at $t=1$ and $t=2$, so it is enough to make sure that the values
it attains at $t=3$, $t=4$ and $t=5$ coincide with those of
$\frac16(t-1)(t-2)(2t^2-6t+3)$;
\item One can substitute in the given repeated sum
the explicit expression of $\sum_{s=1}^ms^p$
as a polynomial of degree $p+1$ in $m$, and carry on the computation, which can also
be achieved in an automated way.
\end{itemize}
For $k=2t+1$ we similarly have
\begin{eqnarray*}
  \sum_{j=1}^{t-2} &  & \Bigg( \sum_{a=2}^{t-j}\sum_{b=1}^{a-1}(2t-2j-a-b)  \\
    & & +\sum_{a=t-j+1}^{2t-2j-2}\sum_{b=1}^{2t-2j-a-1}(2t-2j-a-b)\Bigg)\\
+\sum_{j=1}^{t-1}    & & \Bigg(\sum_{a=2}^{t-j}\sum_{b=1}^{a-1}(2t-2j-a-b+1) \\
    & & +\sum_{a=t-j+1}^{2t-2j-1}\sum_{b=1}^{2t-2j-a}(2t-2j-a-b+1)\Bigg)
\end{eqnarray*}
which is computed to be $\frac13t(t-1)^2(t-2)$, in either of the two ways already indicated above.
Replacing $t=\frac k2$ in the first formula and $t=\frac{k-1}2$ in the second one gives
$\frac1{48}\left(k^4 - 12k^3 + 50k^2 - 84k + x\right)$ with $x=48$ for even $k$ and $x=45$ for odd $k$.
For the second sum we can proceed likewise, getting $\frac13t(t-1)(t-2)$ for $k=2t$,
and $\frac16t(t-1)(2t-1)$ for $k=2t+1$, whence $\frac1{24}\left(k^3 - 6k^2 + yk+z\right)$
with $y=8,\ z=0$ for even $k$ and $y=11,\ z=-6$ for odd $k$. Putting the contributions
together it is now a routine matter to see that the final formula can be written as stated.

\noindent
Dipartimento di Matematica\\
Universit\`a di Pisa\\
Largo Bruno Pontecorvo, 5\\
56127 PISA -- Italy\\
\texttt{petronio@dm.unipi.it}

\end{document}